\title{Independent coalition  in graphs : existance and characterization}
\author{{\small  Mohammad Reza Samadzadeh$^{a}$, Doost Ali Mojdeh$^{b}$\thanks{Corresponding author}}\\ {\small $^{a,b}$Department of
		Mathematics, Faculty of Mathematical Sciences}\\{\small University of Mazandaran, Babolsar, Iran}\\
	{\small $^a$m.samadzadeh02@umail.umz.ac.ir}\\ {\small $^b$damojdeh@umz.ac.ir} \\}
\date{}
\newtheorem{theorem}{Theorem}[section]
\newtheorem{corollary}[theorem]{Corollary}
\newtheorem{lemma}[theorem]{Lemma}
\newtheorem{observation}[theorem]{Observation}
\newtheorem{proposition}[theorem]{Proposition}
\theoremstyle{definition}
\newtheorem{definition}[theorem]{Definition}
\theoremstyle{remark}
\begin{document}
	
	\maketitle
	\begin{abstract}
			\noindent An independent coalition in a graph $G$ consists of two disjoint sets of vertices $V_1$ and $V_2$ neither of which is an independent dominating set but whose union $V_1 \cup V_2$ is an independent dominating set. An independent coalition partition, 
abbreviated, $ic$-partition, in a graph $G$ is a vertex partition $\pi= \lbrace V_1,V_2,\dots ,V_k \rbrace$ such that each set $V_i$ of $\pi$ either is a 
singleton dominating set,  or is not an independent dominating set but forms an independent coalition with another set $V_j \in \pi$. 
The maximum number of classes of an $ic$-partition of $G$ is  the independent coalition number of $G$, denoted by $IC(G)$. In this paper, we study the concept of $ic$-partition. In particular, we discuss the possibility of the existence of $ic$-partitions in graphs and introduce a family of graphs for which no $ic$-partition exists. We also determine the independent coalition number of some classes of graphs and investigate 	
graphs $G$ of order $n$ with $IC(G)\in\{1,2,3,4,n\}$ and the trees $T$ of order $n$ with $IC(T)=n-1$.
\end{abstract}


\section{Introduction}
\label{sec:introduction}
Let $G=(V,E)$ denote a  simple  graph of order $n$ with vertex set $V=V(G)$ and edge set $E=E(G)$. The {\em open
neighborhood} of a vertex $v\in V$ is the set $N(v)=\lbrace u \lvert \lbrace u,v\rbrace \in E \rbrace$, and its {\em closed
neighborhood} is the set $N[v]=N(v) \cup \lbrace v \rbrace$. Each vertex of $N(v)$ is called a {\em neighbor}
of $v$, and  the cardinality of $N(v)$ is called the {\em degree} of $v$, denoted by $deg(v)$ or $deg_G (v)$. A
vertex $v$ of degree $1$ is called a {\em pendant vertex} or {\em leaf}, and its neighbor is called a {\em support vertex}.  A vertex of degree $n-1$ is called a {\em full vertex} while a vertex of degree $0$ is called an {\em isolated vertex}. The {\em minimum} and  {\em maximum 
degree}  of $G$ are denoted by $\delta (G)$ and $\Delta (G)$, respectively.  For a set $S$ of vertices of $G$, the subgraph induced by $S$ is denoted by  $G[S]$.
For two  sets $X$ and $Y$ of vertices, let  $[X,Y]$ denote the set of edges between $X$ and $Y$.	
If every vertex of $X$ is adjacent to every vertex of  $Y$, we say that $[X,Y]$ is {\em full}, while
if there are no edges between them, we say that $[X,Y]$ is {\em empty}.
A subset $V_i \subseteq V$ is called a {\em singleton set} if $\lvert V_i \rvert =1$, and is called a {\em non-singleton set} if $\lvert V_i \rvert \geq 2$.
The {\em join} $G+H$  of two disjoint graphs $G$ and
$H$ is the graph obtained from the union of $G$ and $H$ by adding every possible
edge between  the vertices of $G$ and the vertices of $H$. 
We denote the family of paths, cycles, complete graphs and	 
stars of order $n$ by $P_n$, $C_n$, $K_n$ and $K_{1,n-1}$, respectively, and the complete $k$-partite graph with  partite sets of order   $n_1,n_2,\dots ,n_k$, by $K_{n_1,\dots ,n_k}$.
A double star
with respectively $p$ and $q$ leaves connected to each support vertex is denoted by $S_{p,q}$. The complete graph $K_3$ is called a {\em triangle}, and a graph is {\em triangle-free} if it has no $K_3$ as an induced subgraph.  The {\em girth} of a graph with a cycle is the length of its shortest cycle. For a graph $G$, the girth of $G$  is denoted by $g(G)$. For a graph $G$ of order $n$, let $\overline{G}$ denote the complement of $G$ with $V(\overline{G})=V(G)$ and $E(\overline{G})=E(K_n)-E(G)$ \cite{ref13}.

A set $S\subseteq V$  in a graph $G=(V,E)$ is called a {\em dominating set} if every vertex $v\in V$ is either an element of $S$ or is adjacent to an element of $S$. A set $S\subseteq V$ is called an {\em independent set} if its vertices are pairwise nonadjacent. The {\em vertex independence number}, denoted by $\alpha(G)$,
is the maximum cardinality of an independent set of $G$. An  {\em independent dominating set} in a graph $G$ is a set that is both independent an dominating. 

A partition of the vertices of $G$ into dominating sets (independent dominating sets) is called a domatic partition (idomatic partition).
The maximum number of classes of a domatic partition (idomatic partition) of $G$ is called the domatic number (idomatic number) of $G$, denoted by $d(G)$ ($id (G)$).
The concepts of domination and domatic partition and their variations have been studied widely in the literature. See, for example,  \cite{ref1,ref2,ref3,ref9,ref10,ref11,ref12}. 

The term {\em coalition} was introduced by   Haynes et al, \cite{ref6} and has been studied further in \cite{ref4,ref5,ref7,ref8}.
\begin{definition}
\cite{ref6}  A coalition in a graph $G$ consists of two disjoint sets of vertices $V_1,V_2 \subset V$, neither of which is a dominating set but whose union $V_1 \cup V_2$ is a dominating set.
We say that the sets $V_1$ and $V_2$ form a coalition, and are  {\em coalition partners}.
\end{definition}

\begin{definition}
\cite{ref6}
A {\em coalition partition}, henceforth called a {\em $c$-partition}, in a graph $G$ is a 
{\em vertex partition}	
$\pi =\lbrace V_1,V_2,\dots ,V_k \rbrace$   such that every set $V_i$ of $\pi$ is either a {\em singleton
	dominating set}, or 
is not a dominating set but
forms a coalition with another set $V_j$ in $\pi$. The {\em coalition number}
$C(G)$ equals the maximum order $k$ of a $c$-partition of $G$, and a $c$-partition of $G$ having
order $C(G)$ is called a {\em $C(G)$-partition}.
\end{definition}

Herein we will focus on coalitions involving independent dominating sets in graphs. In  other words, we will study the concepts of independent  coalition and independent  coalition partition which have been introduced in \cite{ref6} as an area for future research. We begin with the following definitions.

\begin{definition}\label{ic-def}

An {\em independent coalition} in a graph $G$ consists of  two disjoint sets of independent vertices $V_1$ and $V_2$, neither of which is an independent dominating set but whose union $V_1\cup V_2$ is an independent dominating set. We say the sets $V_1$ and $V_2$ form an independent coalition, and are  {\em independent coalition partners} (or {\em $ic$-partners}).

\end{definition}
\begin{definition}\label{icp-def}

An {\em independent coalition partition}, abbreviated {\em $ic$-partition}, in a graph $G$ is a vertex partition  $\pi=\{V_1,V_2,\dots ,V_k\}$  such that  every set $V_i$ of $\pi$ is either a singleton dominating set, or is not an independent dominating
set but forms an independent coalition with another set $V_j \in \pi$.  The {\em independent coalition number} $IC(G)$ equals the maximum number of classes of
an $ic$-partition of $G$, and an $ic$-partition of $G$ having order $IC(G)$ is called an {\em $IC(G)$-partition}.

\end{definition}

\begin{definition} \label{sp-def}
\cite{ref8}
Let $G$ be a graph of order $n$ with vertex set $V=\lbrace v_1,v_2, \dots ,v_n \rbrace$. The {\em singleton partition}, denoted $\pi_1$, of $G$ is the partition of $V$ into $n$ singleton sets, that is, $\pi_1 =\lbrace \lbrace v_1\rbrace ,\lbrace v_2 \rbrace ,\dots ,\lbrace v_n \rbrace \rbrace$.
\end{definition}

This paper is organized as follows. Section 1 is devoted to terminology and definitions. We  discuss the possibility of the existence of $ic$-partitions in graphs and  derive  some bounds on independent coalition number  in Section 2. In Section 3, we determine the independent coalition number of some classes of graphs. The graphs $G$ with $IC(G)\in \lbrace 1,2,3,4\rbrace$   are investigated in 	
Section 4. In Section 5, we characterize triangle-free graphs $G$ with $IC(G)=n$ and trees $T$ with $IC(T)=n-1$. Finally, we end the paper with some research problems.

\section{Independent coalition partition: existence and bound}
\label{sec:second}

This section is divided into two subsections. In the first subsection, we show  that not all graphs admit an $ic$-partition, and in the second subsection, we present some bounds on $IC(G)$ whenever the  graph $G$ admits an $ic$-partition.

\subsection{Existence}
In the following definition, we construct graphs with arbitrarily large order for which no $ic$-partition exists.
\begin{definition}
Let $\mathcal{B}$ be the set of all  graphs obtained  from the complete
graph $K_n,$ ($n\ge 4$)  with the vertices $v_i$, $(1\leq i \leq n)$,  and two additional vertices $v_{n+1}, v_{n+2}$ such that $v_{n+1}$ and $v_{n+2}$ are adjacent to $v_n$, and  $v_{n+1}$ is adjacent to $v_{n-1}$. 
Figure \ref{pic1} illustrates such a graph for $n=4$.
\end{definition}
\begin{figure}[!htbp]
\centering
\begin{tikzpicture}[scale=.3, transform shape]
	\node [draw, shape=circle,fill=black] (v1) at  (0,5) {};
	\node [draw, shape=circle,fill=black] (v2) at  (0,0) {};
	\node [draw, shape=circle,fill=black] (v3) at  (5,0) {};
	\node [draw, shape=circle,fill=black] (v4) at  (5,5) {};
	\node [draw, shape=circle,fill=black] (v5) at  (10,0) {};
	\node [draw, shape=circle,fill=black] (v6) at  (15,0) {};
	\node [scale=3] at (0,5.7) {$v_1$};
	\node [scale=3] at (0,-0.7) {$v_2$};
	\node [scale=3] at (5,-0.7) {$v_3$};
	\node [scale=3] at (5,5.7) {$v_4$};
	\node [scale=3] at (10,-0.7) {$v_5$};
	\node [scale=3] at (15,-0.7) {$v_6$};
	\draw(v1)--(v2);
	\draw(v2)--(v3);
	\draw(v3)--(v4);
	\draw(v4)--(v1);
	\draw(v1)--(v3);
	\draw(v2)--(v4);
	\draw(v3)--(v5);
	\draw(v4)--(v5);
	\draw(v4)--(v6);
	
\end{tikzpicture}
\caption{The graph $G$ in $\mathcal{B}$ for $n=4$}\label{pic1}
\end{figure} 
\begin{proposition}\label{claim-1}
Let $G$ be a graph. If $G\in \mathcal{B}$, then $G$ has no $ic$-partition.
\end{proposition}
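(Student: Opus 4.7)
The plan is to first classify the independent dominating sets (IDSs) of $G$ exhaustively and then use this list to show that no $ic$-partition can exist. Since $v_1,\ldots,v_n$ induce a clique $K_n$, every independent set contains at most one of them. Splitting cases by which clique vertex (if any) is chosen, and using that $v_{n+2}$ has the single neighbor $v_n$ while $v_{n+1}$ has only the neighbors $v_n,v_{n-1}$, a short case check shows that the complete list of IDSs of $G$ is
\[
\{v_n\},\qquad \{v_{n-1},v_{n+2}\},\qquad \{v_i,v_{n+1},v_{n+2}\}\ \text{for } 1\le i\le n-2.
\]
In particular, $v_n$ is the only full vertex, so $\{v_n\}$ is the only singleton dominating set.

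Suppose, for contradiction, that $G$ admits an $ic$-partition $\pi$. The first key step is to force $\{v_{n+2}\}\in\pi$. Since the only non-neighbor of $v_{n-1}$ is $v_{n+2}$, the block $V_{n-1}\in\pi$ containing $v_{n-1}$ is necessarily independent and hence a subset of $\{v_{n-1},v_{n+2}\}$. It cannot equal the whole pair, since that would make $V_{n-1}$ itself an IDS of size $2$ (so neither a singleton dominating set nor a valid non-IDS coalition half). Therefore $V_{n-1}=\{v_{n-1}\}$, which is not a singleton dominating set, and so must have an $ic$-partner $P$ with $\{v_{n-1}\}\cup P$ an IDS. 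From the classification, the only IDS containing $v_{n-1}$ is $\{v_{n-1},v_{n+2}\}$, forcing $P=\{v_{n+2}\}$ and hence $\{v_{n+2}\}\in\pi$.

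Finally, I would exploit this to contradict the requirements on the remaining clique vertices. For any $j\in\{1,\ldots,n-2\}$, the non-neighbors of $v_j$ are exactly $\{v_{n+1},v_{n+2}\}$, and since $v_{n+2}$ already sits alone in its block, the block $V_j\in\pi$ containing $v_j$ is a subset of $\{v_j,v_{n+1}\}$. Because $v_{n+1}$ lies in only one block of $\pi$, at most one $j\in\{1,\ldots,n-2\}$ can satisfy $V_j=\{v_j,v_{n+1}\}$; the hypothesis $n\ge 4$ ensures $|\{1,\ldots,n-2\}|\ge 2$, so some $j_0$ gives $V_{j_0}=\{v_{j_0}\}$. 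This singleton is not a singleton dominating set, so it needs an $ic$-partner, but the only IDS containing $v_{j_0}$ is $\{v_{j_0},v_{n+1},v_{n+2}\}$, which would force the partner to be $\{v_{n+1},v_{n+2}\}$ — impossible, since $v_{n+1}$ and $v_{n+2}$ now belong to distinct blocks of $\pi$. The main obstacle is the middle step: recognizing that the lone non-edge at $v_{n-1}$ pins $\{v_{n+2}\}$ down as a singleton block of $\pi$, after which the hypothesis $n\ge 4$ turns the cascade into bookkeeping.
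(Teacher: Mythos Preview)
Your proof is correct and follows essentially the same route as the paper's: both arguments pin down $\{v_{n-1}\}$ and then $\{v_{n+2}\}$ as singleton blocks of $\pi$, and then use $n\ge 4$ to locate some $v_{j_0}$ with $1\le j_0\le n-2$ whose singleton block has no viable $ic$-partner. The only real difference is organizational: you front-load an explicit classification of all independent dominating sets of $G$ and then read off each step from that list, whereas the paper argues these facts inline as needed and splits the endgame into the two cases $\{v_{n+1}\}\in\pi$ versus $\{v_{n+1}\}\notin\pi$. Your pigeonhole formulation (at most one $j$ can have $V_j=\{v_j,v_{n+1}\}$) merges those two cases cleanly, but the underlying idea is identical.
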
 

\begin{proof}

Suppose, to the contrary, that $G$ has an $ic$-partition $\pi$. The vertices  $v_1,v_2,\dots ,v_{n-1}$ are pairwise adjacent, so they must be in different classes. Further, $v_{n}$ is a full vertex, so it must be in a singleton class. Since $v_{n-1}$ is adjacent to all vertices except $v_{n+2}$, and $\lbrace v_{n-1} ,v_{n+2}\rbrace$ dominates $G$, it follows that  $\lbrace v_{n-1}\rbrace \in \pi$. Further, since $\lbrace v_{n-1}\rbrace$ can only form an independent coalition with  $\lbrace v_{n+2}\rbrace$, it follows that $\lbrace v_{n+2}\rbrace \in \pi$. If  $\{v_{n+1}\} \in \pi$, then $\pi$ is a singleton partition. In this case,  $\{v_{n+1}\}$ has no $ic$-partner, a contradiction. Hence, $\{v_{n+1}\} \notin \pi$. It follows that
$\pi$ consists of a non-singleton set $\lbrace v_{n+1},v_i\rbrace$ such that $v_i\in \lbrace v_1,v_2,\dots ,v_{n-2}\rbrace$, and $n$ singleton sets. Assume, without loss of generality, that $\{v_{n+1},v_1\} \in \pi$. Now  for each $2\leq i \leq n-2$, the set $\lbrace v_i\rbrace$ has no $ic$-partner, a contradiction.

\end{proof}
\subsection{Bounds}

Definition \ref{icp-def} implies that an $ic$-partition of a graph $G$ is also a $c$-partition. Further, we note that an $ic$-partition of $G$ is  a proper coloring as well. Hence, we have the following two sharp bounds on $IC(G)$. To see the sharpness of them, consider the complete graph $K_n$. 
\begin{observation} \label{obs1}
Let $G$ be a graph. If $G$ has an $ic$-partition, then  $IC(G) \leq C(G)$. Furthermore, this bound is sharp.
\end{observation}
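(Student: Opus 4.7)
My plan is to unpack the definitions directly: I will show that every $ic$-partition is already a $c$-partition, and then verify sharpness on $K_n$.

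Let $\pi = \{V_1, V_2, \ldots, V_k\}$ be an $IC(G)$-partition; I want to check that $\pi$ satisfies the conditions of Definition of $c$-partition. Fix an arbitrary $V_i \in \pi$. By Definition \ref{icp-def}, either $V_i$ is a singleton dominating set, in which case nothing needs to be verified, or $V_i$ is not an independent dominating set and forms an independent coalition with some $V_j \in \pi$. In the latter case, Definition \ref{ic-def} tells us that $V_i$ and $V_j$ are both independent sets of vertices and that $V_i \cup V_j$ is an independent dominating set. In particular, $V_i \cup V_j$ is a dominating set. To conclude that $\{V_i, V_j\}$ is a coalition in the sense of the $c$-partition definition, I still need that neither $V_i$ nor $V_j$ is a dominating set on its own; this is the one small step where care is required. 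Since $V_i$ is already an independent set, if $V_i$ were a dominating set then $V_i$ would be an independent dominating set, contradicting the fact that $V_i$ is not an independent dominating set. The same argument applies to $V_j$. Hence $V_i$ and $V_j$ form a (ordinary) coalition, so $\pi$ qualifies as a $c$-partition with $|\pi| = IC(G)$ classes, giving $IC(G) \le C(G)$.

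For sharpness, I will exhibit $G = K_n$. In $K_n$ every vertex is a full vertex, so every single vertex forms an independent dominating set. Thus the singleton partition $\pi_1$ of Definition \ref{sp-def} is an $ic$-partition (each class is a singleton dominating set, so no $ic$-partner is needed), which gives $IC(K_n) \ge n$. Since $|\pi| \le n$ for any vertex partition of a graph on $n$ vertices, we obtain $IC(K_n) = n$. By exactly the same reasoning $C(K_n) = n$, and equality $IC(K_n) = C(K_n)$ witnesses sharpness.

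I do not anticipate a genuine obstacle; the only subtlety is remembering that the sets participating in an independent coalition are required to be independent, which is precisely what lets me upgrade ``$V_i$ is not an independent dominating set'' to ``$V_i$ is not a dominating set'' in the translation from $ic$-partition to $c$-partition.
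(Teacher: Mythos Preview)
Your proof is correct and follows the same approach as the paper: the paper simply remarks that by definition every $ic$-partition is a $c$-partition and cites $K_n$ for sharpness, while you spell out the details (in particular the point that an independent set which fails to be an independent dominating set must fail to be dominating). There is no substantive difference in method.
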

\begin{observation}\label{obs2}
Let $G$ be a graph. If $G$ has an $ic$-partition, then  $IC(G) \geq \chi (G)$. Furthermore, this bound is sharp.  
\end{observation}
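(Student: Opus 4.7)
The plan is to observe that any $ic$-partition of $G$ is in fact a proper vertex coloring of $G$, from which the inequality $IC(G)\ge\chi(G)$ follows by the very definition of the chromatic number. First, I would verify that every class $V_i$ of an $ic$-partition $\pi$ is an independent set. If $V_i$ is a singleton dominating set, then it is trivially independent. Otherwise, by Definition \ref{icp-def}, $V_i$ forms an independent coalition with some class $V_j\in\pi$, and by Definition \ref{ic-def} both $V_i$ and $V_j$ consist of independent vertices. Hence every block of $\pi$ is an independent set, so $\pi$ is a proper coloring using $|\pi|$ colors. Applying this to an $IC(G)$-partition yields $IC(G)\ge\chi(G)$.

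For sharpness, I would take $G=K_n$. Every vertex of $K_n$ is full, so each singleton $\{v_i\}$ is an independent dominating set. Thus the singleton partition $\pi_1$ of $K_n$ (Definition \ref{sp-def}) is an $ic$-partition with $n$ classes, giving $IC(K_n)\ge n$. The trivial bound $IC(K_n)\le n$ then forces $IC(K_n)=n=\chi(K_n)$, witnessing equality in the observation.

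I do not anticipate any real obstacle; the result is immediate once one unpacks the definitions, and is essentially already asserted in the paragraph preceding the statement. The only subtlety worth flagging is to insist that belonging to an independent coalition requires the class itself, and not merely its union with its partner, to be an independent set; this is exactly what Definition \ref{ic-def} guarantees.
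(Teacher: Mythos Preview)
Your proposal is correct and matches the paper's own reasoning: the paragraph preceding the observation explicitly notes that an $ic$-partition is a proper coloring (exactly your argument that each class is independent), and for sharpness the paper likewise points to $K_n$. You have merely spelled out the details the paper leaves implicit.
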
 
Given a connected graph $G$ and an $ic$-partition $\pi$ of it,  the following theorem  shows that each set in $\pi$ admits at most $\Delta (G)$ $ic$-partners.

\begin{theorem}\label{the-del}
Let $G$ be a connected graph with maximum degree $\Delta (G)$, and let $\pi$ be an  $ic$-partition of $G$. If $X \in \pi$, then $X$ is in at most $\Delta (G)$ independent coalitions. Furthermore, this bound is sharp.
\end{theorem}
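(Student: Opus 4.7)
The plan is to fix a class $X \in \pi$ together with its $ic$-partners $Y_1,\dots,Y_k$ and exhibit a single vertex of $G$ whose degree forces $k \leq \Delta(G)$.

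First I would record two structural observations. Since $X$ lies in a coalition with $Y_1$, the set $X\cup Y_1$ is independent, hence $X$ itself is independent; and $X$ is not an independent dominating set, so being independent it fails to dominate. Thus $Z := V(G)\setminus N[X]$ is non-empty. Next, for each partner $Y_j$, the independence of $X\cup Y_j$ combined with $X\cap Y_j=\emptyset$ means that $Y_j$ has no vertex in $X$ and no neighbor in $X$, so $Y_j\subseteq Z$.

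The main step exploits connectivity. Since $G$ is connected and both $X$ and $Z$ are non-empty, a shortest path from $X$ to $Z$ yields an edge $wv$ with $w\in N(X)$ and $v\in Z$. The choice of $v$ is engineered so that $w$, being in $N[X]$, lies in none of the $Y_j$'s. Now I count the neighbors of $v$: because the $Y_j$'s are pairwise disjoint, $v$ lies in at most one of them, so for at least $k-1$ indices $j$ we have $v\notin Y_j$. For each such $j$ the set $X\cup Y_j$ dominates $v$, and since $v\notin X$ and $v$ has no neighbor in $X$, there must be a neighbor $u_j\in Y_j$. These neighbors $u_j$ are pairwise distinct and all distinct from $w$ (since each $u_j\in Z$ while $w\notin Z$), producing at least $k$ distinct neighbors of $v$. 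Therefore $\deg(v)\geq k$, and $k\leq\Delta(G)$.

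The main obstacle is the possibility that $v$ itself lies in some $Y_{i_0}$: without the extra neighbor $w$ produced by connectivity, a direct count only gives $k-1$ neighbors inside $\bigcup_j Y_j$, yielding only the weaker bound $k\leq\Delta(G)+1$. To establish sharpness I would use $P_4=v_1v_2v_3v_4$ with the singleton partition $\pi_1$: the class $\{v_1\}$ forms independent coalitions with both $\{v_3\}$ and $\{v_4\}$, attaining exactly $\Delta(P_4)=2$ independent coalition partners.
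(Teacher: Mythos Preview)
Your argument is correct and rests on the same mechanism as the paper's proof: each $ic$-partner $Y_j$ is contained in $Z=V(G)\setminus N[X]$ and must supply a neighbor to any vertex of $Z$ it does not contain, while connectivity provides one additional neighbor coming from $N[X]$. The paper packages this in two phases---first picking an arbitrary undominated vertex to obtain the weak bound $k\le\Delta(G)+1$, then assuming $k=\Delta(G)+1$ and showing every vertex of $\bigcup_j Y_j$ already has $\Delta(G)$ neighbors inside that union, so that an edge leaving it (from connectivity) forces a degree exceeding $\Delta(G)$. Your version is more direct: you select a boundary vertex $v\in Z$ adjacent to $N(X)$ at the outset and count once, which is a cleaner route to the same conclusion. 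For sharpness the paper attaches a pendant $P_2$ to a vertex of $K_n$; your $P_4$ example is simpler and equally valid.
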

\begin{proof}
Let $\pi$ be an $ic$-partition of $G$, and let $X$ be a set in $\pi$. If $X$ is a dominating set, then  it has no $ic$-partner.
Hence, we may assume that $X$  does not dominate $G$. Let $x$ be a
vertex that is not dominated by $X$. Now every $ic$-partner of $X$ must dominate $x$, that is, it must contain a
vertex in $N[x]$. Hence, there are at most $\lvert N[x] \rvert \leq \Delta (G)+1$ sets in $\pi$ that can form an independent  coalition with $X$. Now we show that $X$ cannot form an independent coalition with  $\Delta (G)+1$ sets. Suppose, to the contrary, that  $X$ has $\Delta (G)+1$ $ic$-partners (name $V_1, V_2, \dots ,V_{\Delta+1}$). Consequently,  $[X,V_i]$ is empty for each $1\leq i \leq \Delta (G)+1$. Let
$U=\bigcup _{i=1}^{\Delta (G)+1} V_i$, and $G^\prime =G[U]$. Consider an arbitrary vertex $v \in U$ (say $v \in V_i$) and an arbitrary  set $V_j$ such that  $1 \leq j \leq \Delta(G)+1$ and $j \neq i$. Since $X \cup V_j$ dominates $G$ and $[X,V_i]$ is empty, it follows that $v$ has a neighbor in $V_j$. Choosing $V_j$ arbitrarily, we conclude that $deg_{G^\prime} (v) \geq \Delta(G)$, and so $deg_{G^\prime}(v)=\Delta(G)$. Hence, for each $v\in U$, we have $deg_{G^\prime}(v)=\Delta(G)$.  Now  since $G$ is connected, there is a path $P=(v_0,v_1,\dots ,v_k)$ connecting $U$  to $X$ such that $v_0 \in U$ and $v_k \in X$. Note that $[U,X]$ is empty, and so  $V(P) \setminus (U \cup X) \neq \emptyset$. Let $i$ be the smallest index for which $v_i \notin U \cup X$. It follows that $v_{i-1} \in U$, and so $deg_{G^\prime} (v_{i-1}) = \Delta(G)$. Thus, we have $deg_{G} (v_{i-1}) \geq deg_{G^\prime} (v_{i-1})+1 = \Delta(G) +1$, a contradiction.  

To prove the sharpness, let  $G$ be the graph  that is obtained from the complete graph $K_n$ with vertices
$v_i$, ($1\le i\le n$), and a path  $P_2=(a,b)$, where $b$ is adjacent to $v_1$. Let $A=\lbrace a\rbrace$, $B=\lbrace b\rbrace$ and $V_i=\lbrace v_i\rbrace$, for $1\leq i\leq n$. One can observe that $\Delta (G)=n$ and  that  the singleton partition $\pi_1 =\lbrace V_1,V_2,\dots ,V_n ,A ,B \rbrace$ is an $ic$-partition  of $G$ such that $A$ forms an independent coalition with  $V_i$, for each $1\leq i \leq n$. This completes the proof.
\end{proof}

Note that the bound presented in Theorem \ref{the-del} does not hold for disconnected graphs. As a counterexample, consider the graph $G=K_2 \cup K_2$ and the singleton partition $\pi_1$ of it. On can verify that $\pi_1$ is an $ic$-partition of $G$ such that each set in $\pi_1$ has two $ic$-partners, while $\Delta (G)=1$.  

The next bound relates independent coalition number of a graph to its idomatic number. As we will see in the proof of Theorem \ref{the-doma}, any graph admitting an idomatic partition has  an  $ic$-partition. However, the converse is not necessarily true. For example, the singleton partition of the cycle $C_5$ is  an $ic$-partition of it, while $C_5$  has no idomatic partition.
Or the cycle  $C_7$ has the $ic$-partition $\pi=\lbrace \lbrace v_1,v_5\rbrace$, $\lbrace v_2,v_4\rbrace, \lbrace v_3\rbrace, \lbrace v_6\rbrace, \lbrace v_7\rbrace \rbrace$, while it has no idomatic partition.

\begin{theorem}\label{the-doma}
Let $G$ be a connected graph, and let
$r\geq 0$ be the number of full vertices of $G$. If $G$ admits an idomatic partition, then $IC(G) \geq 2id(G)-r$.
\end{theorem}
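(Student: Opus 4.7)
The plan is to start from an idomatic partition of $G$ with $id(G)$ classes and refine it into an $ic$-partition by splitting each ``big'' class into two pieces, while keeping the singleton full-vertex classes intact.

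First I would take an idomatic partition $\pi' = \{D_1, D_2, \dots, D_k\}$ with $k = id(G)$ classes. The key preliminary observation is that any singleton class $D_i = \{v\}$ of $\pi'$ must consist of a full vertex, since $D_i$ must dominate $G$ and $v$ alone dominates $G$ exactly when $\deg(v) = n-1$. Hence exactly $r$ of the classes $D_1, \dots, D_k$ are singletons, and the remaining $k - r$ classes each satisfy $|D_i| \ge 2$.

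Next, for each non-singleton class $D_i$ I would choose an arbitrary nonempty partition $D_i = D_i^1 \cup D_i^2$ into two parts. Because $D_i$ is independent, each $D_i^j$ is also independent. The crucial point is that $D_i^1$ cannot dominate any vertex of $D_i^2$: such a vertex lies in $D_i$, is not in $D_i^1$, and has no neighbor in $D_i^1 \subseteq D_i$ (independence of $D_i$). So $D_i^1$ is not a dominating set, and symmetrically neither is $D_i^2$; yet $D_i^1 \cup D_i^2 = D_i$ is an independent dominating set by hypothesis, so $D_i^1$ and $D_i^2$ are independent coalition partners.

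Finally, I would form the partition $\pi$ consisting of the $r$ untouched singleton full-vertex classes together with the $2(k-r)$ pieces obtained by splitting each non-singleton $D_i$. Every class of $\pi$ is either a singleton dominating set (the full-vertex classes) or has an $ic$-partner inside $\pi$ (its sibling half), so $\pi$ is an $ic$-partition. This gives
\[
IC(G) \ge r + 2(k-r) = 2k - r = 2\,id(G) - r,
\]
as desired. I do not foresee a serious obstacle here; the only subtle point is the identification of singleton idomatic classes with full vertices, and the verification that splitting preserves the coalition property, both of which are immediate from independence of the original classes.
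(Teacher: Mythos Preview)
Your proposal is correct and follows essentially the same approach as the paper: start from a maximum idomatic partition, keep the full-vertex singletons, and split every other class in two to obtain an $ic$-partition of size $2\,id(G)-r$. One tiny presentational point: to conclude that \emph{exactly} $r$ classes are singletons you also need the (trivial) converse that every full vertex lies in a singleton class of $\pi'$, which follows since any class containing a full vertex is independent and hence cannot contain another vertex.
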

\begin{proof}
Let  $F=\lbrace v_1,v_2,\dots ,v_r\rbrace$ be the set of full vertices of $G$, and let $\pi =\lbrace V_1 ,V_2 ,\dots ,V_{id(G)} \rbrace$ be an idomatic partition of  $G$ of order $id(G)$. Note  that each full vertex must be in a singleton set of $\pi$. Without loss of generality, assume that $v_i \in V_i$, for each $1\leq i\leq r$. It follows that  for each $r+1\leq i\leq k$, we have $\lvert V_i\rvert \geq 2$. Now for each $r+1\leq i\leq k$, we partition $V_i$ into two  nonempty subsets $V_{i,1}$ and $V_{i,2}$. Note that no proper subset of $V_i$ is a dominating set. Thus, neither $V_{i,1}$ nor $V_{i,2}$ is an independent dominating set, and so $V_{i,1}$ and $V_{i,2}$ are $ic$-partners. It follows that the partition $\pi^\prime =\lbrace V_1,V_2,\dots ,V_r, V_{r+1,1} ,V_{r+1,2}, V_{r+2,1} ,V_{r+2,2} , \dots ,V_{id(G),1} ,V_{id(G),2}\rbrace$ is an $ic$-partition of $G$ of order $2id(G)-r$. Hence, $IC(G) \geq 2id(G)-r$.
\end{proof}
\section{Independent coalition number for some classes of graphs}
\label{sec:theorems}
Let us  begin this section  with some routine results.

\begin{observation} \label{obs-comp}
For $n\geq 1$, we have $IC(K_n)=n$.
\end{observation}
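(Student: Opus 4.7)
The plan is to establish the equality by matching a trivial upper bound with a simple singleton-partition construction, exploiting the fact that in $K_n$ every vertex is a full vertex.

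First I would observe the upper bound $IC(K_n)\le n$. This is immediate: an $ic$-partition is in particular a partition of $V(K_n)$ into nonempty classes, and a set of $n$ elements admits at most $n$ nonempty parts. (Alternatively, one could invoke Observation \ref{obs2} in reverse and note that $\chi(K_n)=n$ is not the relevant bound here; the size-of-partition bound is what matters.)

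Next I would prove the lower bound $IC(K_n)\ge n$ by exhibiting the singleton partition $\pi_1=\{\{v_1\},\dots,\{v_n\}\}$ of Definition \ref{sp-def} and verifying it is an $ic$-partition. In $K_n$, every vertex $v_i$ is a full vertex, so $\{v_i\}$ is trivially independent and dominates $V(K_n)=N[v_i]$. Hence each class of $\pi_1$ is a singleton dominating set, which is exactly the first alternative allowed by Definition \ref{icp-def}; no $ic$-partner is required. Therefore $\pi_1$ is a valid $ic$-partition of $K_n$ of order $n$, and $IC(K_n)\ge n$.

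Combining the two bounds yields $IC(K_n)=n$. There is no real obstacle here since $K_n$ is the easiest case: the fact that every singleton is already an independent dominating set trivializes the coalition requirement entirely, and no case analysis on $n$ is needed beyond noting that for $n=1$ the single vertex is a (vacuously dominating) singleton class.
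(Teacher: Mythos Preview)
Your proof is correct and is exactly the natural argument: the singleton partition works because every vertex of $K_n$ is full, so each class is a singleton dominating set, and the trivial partition-size bound gives the matching upper bound. The paper states this as a routine observation without proof, so your write-up is precisely what is implicitly intended.
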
 
\begin{observation} \label{obs-star}
For $n\geq 3$, we have $IC(K_{1,n-1})=3$.
\end{observation}
\begin{observation} \label{obs-dstar}
For $p,q\geq 1$, we have $IC(S_{p,q})=4$.
\end{observation}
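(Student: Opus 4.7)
The plan is to first enumerate all independent dominating sets of $S_{p,q}$ and then use this list to both exhibit an $ic$-partition with four classes and to bound the size of an arbitrary $ic$-partition. Let $u, v$ denote the two support vertices (with $u$ adjacent to $v$), and write $u_1, \dots, u_p$ and $v_1, \dots, v_q$ for the leaves at $u$ and at $v$, respectively. Each leaf has a unique neighbor, so any independent dominating set must, for every leaf, contain either the leaf or its support; since $u\sim v$, at most one of $u,v$ lies in it. A short case analysis (using $p, q \geq 1$) shows that $S_{p,q}$ has exactly three independent dominating sets:
\[
A=\{u\}\cup\{v_1,\dots,v_q\},\quad B=\{v\}\cup\{u_1,\dots,u_p\},\quad C=\{u_1,\dots,u_p,v_1,\dots,v_q\}.
\]

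For the lower bound, I would exhibit the partition $\pi=\bigl\{\{u\},\{v\},\{u_1,\dots,u_p\},\{v_1,\dots,v_q\}\bigr\}$ and verify that $\{u\}$ partners with $\{v_1,\dots,v_q\}$ to yield $A$, while $\{v\}$ partners with $\{u_1,\dots,u_p\}$ to yield $B$. None of the four sets is itself an independent dominating set, so this is an $ic$-partition and $IC(S_{p,q})\geq 4$.

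For the matching upper bound, let $\pi$ be any $ic$-partition. The maximum degree of $S_{p,q}$ is $\max(p,q)+1 < p+q+1 = n-1$ (using $p,q\geq 1$), so $G$ has no full vertex and every class of $\pi$ must admit an $ic$-partner. Let $U$ and $V'$ denote the classes containing $u$ and $v$ respectively. Independence forces $U\subseteq A$ and $V'\subseteq B$. Since $u$ appears only in $A$ among $\{A,B,C\}$, any $ic$-partner $W$ of $U$ must satisfy $U\cup W=A$, so $W=A\setminus U$; consequently $W_A:=A\setminus U$ is itself a class of $\pi$ and is nonempty (otherwise $U=A$ would be a non-singleton independent dominating set, contradicting the definition of an $ic$-partition). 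Symmetrically, $W_B:=B\setminus V'$ is a nonempty class. Because every $v$-leaf lies in $A=U\cup W_A$ and every $u$-leaf lies in $B=V'\cup W_B$, the four pairwise distinct classes $U,V',W_A,W_B$ already cover $V(G)$, forcing $|\pi|\leq 4$.

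The step I expect to be the most delicate is the uniqueness of the partner $W_A$: one must rule out the possibility that the class partnering with $U$ is only a proper subset of $A\setminus U$, the rest of $A\setminus U$ living in some other class. The observation above resolves this, because the $ic$-partner condition $U\cup W=A$ together with the disjointness of classes in $\pi$ pins $W$ down to be exactly $A\setminus U$.
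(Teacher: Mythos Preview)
Your argument is correct. The paper records this result as an observation without proof, so there is no authorial argument to compare against; your enumeration of the three independent dominating sets of $S_{p,q}$, the explicit four-class $ic$-partition, and the upper-bound step showing that the classes $U$, $V'$, $A\setminus U$, $B\setminus V'$ are forced (and already cover $V(S_{p,q})$) are all sound. The only point you leave implicit is the pairwise distinctness of these four classes, but that is immediate from $u\in U\subseteq A$, $v\in V'\subseteq B$, $W_A\subseteq\{v_1,\dots,v_q\}$, $W_B\subseteq\{u_1,\dots,u_p\}$, together with the nonemptiness you already established.
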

For complete multipartite graphs, the following result is obtained.
\begin{proposition}\label{prop5}
Let $G=K_{n_1,n_2,\dots ,n_k}$ be a complete $k$-partite graph with $m\geq 0$ full vertices ($m$ partite sets of cardinality $1$). Then $IC(G)=2k-m$.
\end{proposition}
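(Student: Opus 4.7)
The plan is to exploit the very rigid structure of independent dominating sets in a complete multipartite graph. First, I would observe that the independent dominating sets of $G=K_{n_1,\ldots,n_k}$ are precisely the $k$ partite sets $P_1,\ldots,P_k$: any independent set of $G$ lies entirely inside a single partite set (since distinct parts are completely joined), and a proper subset of $P_i$ fails to dominate the remaining vertices of $P_i$, which have no neighbors inside $P_i$.

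With this in hand, I would show that every $ic$-partition $\pi$ has exactly $2k-m$ classes. Each class of $\pi$ is either a singleton dominating set (necessarily a full vertex, i.e., a partite set of size one) or an independent set, and in both cases is contained in a single partite set. For each of the $m$ singleton partite sets $\{v\}$, the vertex $v$ must form its own class, because any larger set containing the full vertex $v$ would not be independent and could neither be a singleton dominating set nor participate in an independent coalition. For each non-singleton partite set $P_j$, let $t_j$ count the classes of $\pi$ contained in $P_j$; these classes partition $P_j$. The crux is to prove $t_j=2$. The case $t_j=1$ is excluded because the unique class would then equal $P_j$, which is itself an independent dominating set of size at least two and so cannot belong to an $ic$-partition. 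For $t_j\geq 3$, pick any class $A\subsetneq P_j$; its $ic$-partner $B$ must satisfy $A\cup B=P_i$ for some $i$, and since $B$ is independent it lies in a single partite set $P_l$. If $l\neq j$, then $A\cup B$ meets two different partite sets and is not independent; hence $l=j$ and $B=P_j\setminus A$. But with $t_j\geq 3$ the set $P_j\setminus A$ is a union of $t_j-1\geq 2$ classes of $\pi$, not a single class, a contradiction. Summing, $|\pi|=m+2(k-m)=2k-m$, giving the upper bound.

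For the lower bound I would simply exhibit a partition attaining it: take the $m$ full vertices as singleton classes, and for each non-singleton partite set $P_j$ pick any bipartition $P_j=A_j\cup B_j$ into non-empty parts. Each $A_j$ and $B_j$ is an independent non-dominating subset of $P_j$, and their union $P_j$ is an independent dominating set, so they form an independent coalition; the singleton full vertices are singleton dominating sets. This supplies an $ic$-partition with $2k-m$ classes.

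The main obstacle is the upper bound, and within it the case analysis ruling out $t_j\geq 3$. Everything rests on the pivotal observation that the $ic$-partner of a subset of $P_j$ must again lie inside $P_j$, forcing the partner to be exactly the complement within $P_j$; once this is in place the remainder of the argument is essentially bookkeeping.
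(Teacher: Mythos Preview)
Your argument is correct and follows essentially the same route as the paper: both proofs exhibit the same lower-bound construction (split each non-singleton partite set in two), and for the upper bound both rely on the facts that every class of an $ic$-partition lies inside a single partite set and that each non-singleton partite set is split into at most two classes. Your write-up is more explicit in justifying the latter point via the observation that an $ic$-partner of $A\subsetneq P_j$ must equal $P_j\setminus A$, and you in fact prove the slightly stronger statement that \emph{every} $ic$-partition of $G$ has exactly $2k-m$ classes, not merely that the maximum is $2k-m$.
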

\begin{proof}

Let $\pi =\lbrace V_1,V_2,\dots ,V_k\rbrace$ be the partition of $G$ into its partite sets. 
Assume, without loss of generality, that the sets $V_i$, for $1\leq i\leq m$, are those containing full vertices. Now for each  $m+1 \leq i\leq k$, we partition $V_i$ into two sets  $V_{i,1}$ and $V_{i,2}$. Observe that $V_{i,1}$ and $V_{i,2}$ are $ic$-partners, and so the partition \\ $\pi^\prime=\lbrace \lbrace V_1\rbrace ,\lbrace V_2\rbrace ,\dots ,\lbrace V_m\rbrace,\lbrace V_{m+1,1},V_{m+1,2}\rbrace ,\lbrace V_{m+2,1},V_{m+2,2}\rbrace , \dots ,\lbrace V_{k,1},V_{k,2}\rbrace \rbrace$ is an $ic$-partition of $G$ of order $2k-m$. Thus, $IC(G)\geq 2k-m$. Now  let $\pi^{\prime\prime}$ be an $ic$-partition of $G$. We note that $\pi^{\prime\prime}$ has the following properties:
\begin{itemize}
	\item
	For any set $S \in \pi^{\prime \prime}$,
	all vertices in $S$ are  in the same partite set of $G$.
	\item
	For any set $V_i \in \pi$, the vertices in $V_i$ are  in at most two sets of $\pi^{\prime\prime}$.
\end{itemize}

Hence, we have $IC(G)\leq 2k-m$, and so $IC(G)=2k-m$.
\end{proof}

Next we determine the independent coalition number of all paths and cycles.
\begin{lemma} \cite{ref6} \label{lempath}
For any path $P_n$, $C(P_n)\leq 6$.
\end{lemma}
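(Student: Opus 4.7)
The plan is to dispatch the small cases $n \le 6$ trivially (since $C(P_n)\le n$) and then, for $n\ge 7$, combine a generic ``at most $\Delta(G)+1$ partners per class'' bound with a careful analysis centered on the two endpoints of the path. The partner bound itself is an immediate weakening of the first paragraph of the proof of Theorem~\ref{the-del}: if $X$ does not dominate some vertex $x$, every coalition partner of $X$ must contain a vertex of $N[x]$, and so there are at most $|N[x]|\le\Delta(G)+1$ partners. For $P_n$ this yields at most $3$ partners per class.

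Fix any $c$-partition $\pi$ of $P_n$ with $n\ge 7$. Since no vertex of $P_n$ is full, $\pi$ contains no singleton dominating set, so every class of $\pi$ admits at least one coalition partner. Put $L=N[v_1]=\{v_1,v_2\}$ and $R=N[v_n]=\{v_{n-1},v_n\}$. Domination of $v_1$ (resp.\ $v_n$) forces every class either to meet $L$ (resp.\ $R$) itself or to have a coalition partner that does. Classify the classes of $\pi$ by the pair (meets $L$?, meets $R$?) into types $(1,0), (0,1), (1,1), (0,0)$, with counts $p, q, r, s$. Since $|L|=|R|=2$ one has $p+r\le 2$ and $q+r\le 2$, and a class of type $(0,0)$ must have a coalition partner meeting both $L$ and $R$, i.e.\ one of the $r$ type-$(1,1)$ classes.

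If $r=0$, then no type-$(0,0)$ class can have a valid partner, whence $s=0$ and $|\pi|=p+q\le 4$. If $r=1$, the unique type-$(1,1)$ class has at most $3$ coalition partners, giving $s\le 3$ and $|\pi|\le 1+1+1+3=6$.

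The main obstacle is the case $r=2$, where naive partner counting only yields $|\pi|\le 8$. Here $p=q=0$ and the two type-$(1,1)$ classes $V_\alpha, V_\beta$ between them contain all of $L\cup R$. I would split on whether $V_\alpha$ and $V_\beta$ are themselves coalition partners. If they are, each uses one of its three partner slots on the other, leaving at most $2$ middle-class partners each and hence $s\le 4$, so $|\pi|\le 6$. If they are not, then $V_\alpha\cup V_\beta$ fails to dominate some vertex $v_k$ of $P_n$, which must lie in the interior; the three vertices $v_{k-1},v_k,v_{k+1}$ then all lie in middle classes, and every middle class---being forced to partner with $V_\alpha$ or $V_\beta$ and hence to contribute the domination of $v_k$ in the union---must contain one of these three vertices, so $s\le 3$ and $|\pi|\le 5$. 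In every case $|\pi|\le 6$, completing the argument. The delicate point throughout is that the two subcases of $r=2$ are exhaustive precisely because $V_\alpha\cup V_\beta$ either dominates $P_n$ or misses some vertex, and each subcase yields the desired sharper bound via either the partner-slot accounting or the localization to $N[v_k]$.
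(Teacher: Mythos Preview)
The paper does not prove Lemma~\ref{lempath}; it is quoted from \cite{hhhmm3} and used as a black box. So there is no in-paper argument to compare your attempt against.

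That said, your proof is correct and self-contained. The key ingredients all check out: for $n\ge 7$ no class of a $c$-partition can be a singleton dominating set, so every class has a partner; the generic ``at most $\Delta+1=3$ partners'' bound follows exactly as you say from the first paragraph of Theorem~\ref{the-del} (the sharpening to $\Delta$ there uses independence, which you rightly do not invoke); and the $L$/$R$ classification with $p+r\le 2$, $q+r\le 2$ is forced by $|L|=|R|=2$. The case split on $r$ is clean, and in the delicate case $r=2$ your dichotomy is exhaustive because neither $V_\alpha$ nor $V_\beta$ can itself dominate (each has at least two vertices since $L\cap R=\emptyset$), so ``not coalition partners'' is equivalent to ``$V_\alpha\cup V_\beta$ fails to dominate some $v_k$''. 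In that last subcase the localization to $N[v_k]$ is valid since $V_\alpha\cup V_\beta\supseteq\{v_1,v_2,v_{n-1},v_n\}$ forces $4\le k\le n-3$, so $v_{k-1},v_k,v_{k+1}$ all exist and lie outside $V_\alpha\cup V_\beta$; with $p=q=0$ they sit in middle classes, and every middle class must hit $N[v_k]$ to supply the domination of $v_k$ that its partner in $\{V_\alpha,V_\beta\}$ cannot. All bounds close at $6$ as claimed.
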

\begin{theorem}\label{the-path}
For the path $P_n$,
\[
IC(P_n) = \begin{cases}
	n & \text{if } n \leq 4; \\
	4 & \text{if } n=5; \\
	5 & \text{if } n=6,7,8,9; \\
	6 & \text{if } n\geq 10.
	
\end{cases}
\]
\end{theorem}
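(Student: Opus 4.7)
The plan is to handle the five regimes $n\le 4$, $n=5$, $n=6$, $n\in\{7,8,9\}$, and $n\ge 10$ in turn, combining explicit $ic$-partitions (lower bounds) with structural obstructions (upper bounds). For the upper bounds I rely on the trivial $IC(G)\le n$ together with Lemma \ref{lempath}, which gives $IC(P_n)\le C(P_n)\le 6$ once $n\ge 6$. I will also use that every class of an $ic$-partition is an independent set (Observation \ref{obs2}), so class sizes are bounded by $\alpha(P_n)=\lceil n/2\rceil$.

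For $n\le 4$ I just verify directly that the singleton partition $\pi_1$ is an $ic$-partition: in each of $P_1,P_2,P_3,P_4$ every singleton either dominates the graph or is an $ic$-partner of some other singleton via a two-vertex IDS (for $P_4$ the pairs $(\{v_1\},\{v_4\})$, $(\{v_2\},\{v_4\})$, $(\{v_3\},\{v_1\})$ do the job). For $n=5$ the obstruction is $v_3$: no $2$-element IDS of $P_5$ contains $v_3$, so $\{v_3\}$ has no singleton partner in $\pi_1$; hence $IC(P_5)<5$, and the explicit partition $\{\{v_1\},\{v_2\},\{v_4\},\{v_3,v_5\}\}$ shows $IC(P_5)\ge 4$. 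For $n=6$ there is no $2$-element IDS at all, so no singleton can partner with a singleton; combined with Lemma \ref{lempath} this gives $IC(P_6)\le 5$, and $\{\{v_1,v_6\},\{v_2\},\{v_3\},\{v_4\},\{v_5\}\}$ matches it.

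For $n\in\{7,8,9\}$ the nontrivial step is the upper bound $IC(P_n)\le 5$. I would argue by contradiction: suppose $\pi$ is a $6$-partition of $P_n$. Since $\sum|V_i|=n\le 9$, the possible class-size profiles are short to enumerate. In every union $V_i\cup V_j$ that is an IDS one has $|V_i|+|V_j|\ge\gamma_i(P_n)=\lceil n/3\rceil\ge 3$, so no two singletons can coalition; hence every singleton must partner with a class of size $\ge 2$ and produce a specific 3- or 4-element IDS. Listing those small IDSs of $P_7,P_8,P_9$ by hand, one checks for every admissible profile that some singleton class $\{v_i\}$ with $i$ in the ``deep interior'' has no permissible partner (its potential partners are forced to contain vertices adjacent to $v_i$, breaking independence of the union). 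This case analysis is the main technical obstacle. The matching lower bounds are by explicit $5$-partitions, for instance
\[
P_7:\quad\{\{v_4\},\{v_1,v_6\},\{v_2,v_7\},\{v_3\},\{v_5\}\},
\]
where $\{v_1,v_4,v_6\},\{v_2,v_4,v_7\},\{v_1,v_3,v_6\},\{v_2,v_5,v_7\}$ witness the four required coalitions; analogous templates work for $P_8$ and $P_9$.

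For $n\ge 10$ I take as base the $6$-partition of $P_{10}$
\[
\pi=\{\{v_1,v_6,v_9\},\{v_2,v_5,v_{10}\},\{v_3\},\{v_4\},\{v_7\},\{v_8\}\},
\]
where $\{v_1,v_6,v_9\}$ coalitions with $\{v_3\}$ and with $\{v_4\}$ through the IDSs $\{v_1,v_3,v_6,v_9\}$ and $\{v_1,v_4,v_6,v_9\}$, and $\{v_2,v_5,v_{10}\}$ coalitions with $\{v_7\}$ and with $\{v_8\}$ through $\{v_2,v_5,v_7,v_{10}\}$ and $\{v_2,v_5,v_8,v_{10}\}$. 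For $n>10$ I extend this template by absorbing the extra vertices $v_{11},v_{12},\dots$ into one of the two triples while keeping that class both independent and not itself an IDS (e.g.\ for $n=11$ use $\{v_1,v_6,v_9,v_{11}\}$, which still coalitions with $\{v_3\}$ and $\{v_4\}$); a simple periodic rule extends this to all $n\ge 10$ without disturbing the six coalitions. Combined with Lemma \ref{lempath} this yields $IC(P_n)=6$ for $n\ge 10$, completing the theorem.
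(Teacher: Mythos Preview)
Your overall plan matches the paper's, but there are two concrete gaps and one place where you miss a much cleaner tool.

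\textbf{The $n=6$ upper bound is wrong as stated.} You claim $P_6$ has no $2$-element independent dominating set, but $\{v_2,v_5\}$ is one, so $\{v_2\}$ and $\{v_5\}$ \emph{are} singleton $ic$-partners. The correct obstruction is that a \emph{particular} singleton, namely $\{v_1\}$ (or $\{v_6\}$), has no singleton $ic$-partner: none of $\{v_1,v_3\},\{v_1,v_4\},\{v_1,v_5\},\{v_1,v_6\}$ dominates $P_6$. That is what rules out $\pi_1$ and gives $IC(P_6)\le 5$.

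\textbf{The $n\ge 10$ extension is underspecified and, read literally, fails.} You say you absorb $v_{11},v_{12},\dots$ ``into one of the two triples''. Already for $n=12$ you cannot put both $v_{11}$ and $v_{12}$ into the same class (they are adjacent), so independence breaks. The paper gives the rule explicitly: set
\[
V_1=\{v_1,v_6\}\cup\{v_{2k-1}:k\ge 5\},\qquad V_2=\{v_2,v_5\}\cup\{v_{2k}:k\ge 5\},
\]
with $V_3=\{v_3\}$, $V_4=\{v_4\}$, $V_5=\{v_7\}$, $V_6=\{v_8\}$. This alternates the new vertices between the two large classes, keeps both independent and non-dominating, and preserves all four coalitions for every $n\ge 10$. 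Your ``simple periodic rule'' needs to be exactly this (or something equivalent); you should state it.

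\textbf{For $n\in\{7,8,9\}$ you are doing it the hard way.} Your brute-force enumeration of small IDSs and class-size profiles can be made to work, but the paper avoids almost all of it via Theorem~\ref{the-del}: in a connected graph any class of an $ic$-partition has at most $\Delta(G)$ $ic$-partners, and $\Delta(P_n)=2$. For $P_7$ with profile $(2,1,1,1,1,1)$ this kills the argument in one line (the size-$2$ class would need five partners). For $P_8$ and $P_9$ the same bound reduces each remaining profile to a single short observation (e.g.\ for $P_8$, profile $(2,2,1,1,1,1)$: every coalition has size $3$, but $v_3$ and $v_6$ lie in no size-$3$ IDS of $P_8$). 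Using Theorem~\ref{the-del} turns your ``main technical obstacle'' into a few lines.
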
 
\begin{proof}
It is clear that for $1\leq n\leq 4$, we have $IC(P_n)=n$. Now
let $n=5$. Consider the path $P_5$ with $V(P_5)=\lbrace v_1,v_2,v_3,v_4,v_5\rbrace$. 	It is easily seen
that $IC(P_5)\neq 5$. Thus, $IC(P_5)\leq 4$. The partition $\lbrace \lbrace v_1,v_3\rbrace ,\lbrace v_2\rbrace,\lbrace v_4\rbrace,\lbrace v_5\rbrace \rbrace$ is an $ic$-partition of $P_5$, so $IC(P_5)=4$.
Now  assume  $n=6$. Consider the path $P_6$ with $V(P_6)=\lbrace v_1,v_2,v_3,v_4,v_5,v_6\rbrace$. It is clear that $IC(P_6)\neq 6$. The partition $\lbrace \lbrace v_1,v_6\rbrace ,\lbrace v_2\rbrace,\lbrace v_3\rbrace,\lbrace v_4\rbrace,\lbrace v_5\rbrace \rbrace$ is an $ic$-partition of $P_6$, so $IC(P_6)=5$.
Next assume  $n=7$. Consider the path $P_7$ with $V(P_7)=\lbrace v_1,v_2,v_3,v_4,v_5,v_6,v_7\rbrace$. By Lemma \ref{lempath} and Observation \ref{obs1}, we have $IC(P_n)\leq 6$. Now
we show that $IC(P_7)\neq 6$. Suppose that $IC(P_7)= 6$. Let $\pi$ be an $IC(P_7)$-partition. We note that $\pi$ consists of a set (name $A$) of cardinality $2$ and five singleton sets. Since $\gamma_i (P_7)=3$, each singleton set must be an $ic$-partner of $A$.  On the other hand,
Theorem \ref{the-del} implies that $A$ has at most two $ic$-partners, a contradiction.
The partition $\lbrace \lbrace v_1,v_6\rbrace ,\lbrace v_2,v_7\rbrace,\lbrace v_3\rbrace,\lbrace v_4\rbrace ,\lbrace v_5\rbrace \rbrace$ is an $ic$-partition of $P_7$. Therefore, $IC(P_7)=5$. Next we assume $n=8$. Consider the path $P_8$ with $V(P_8)=\lbrace v_1,v_2,v_3,v_4,v_5,v_6,v_7,v_8\rbrace$.
By Lemma \ref{lempath} and Observation \ref{obs1}, we have $IC(P_n)\leq 6$. Now we show that $IC(P_8)\neq 6$. Suppose that $IC(P_8)=6$. Let $\pi$ be an $IC(P_8)$-partition. We consider two cases.

\textbf{Case 1.} $\pi$ consists of a set (name $A$) of cardinality $3$ and five singleton sets. Since $\gamma_i (P_8)=3$, each singleton set must be an $ic$-partner of $A$. On the other hand, Theorem \ref{the-del} implies that $A$ has at most two $ic$-partners, a contradiction. 

\textbf{Case 2.} $\pi$ consists of two sets  of cardinality $2$ and four singleton sets.  Since $\gamma_i (P_8)=3$, each singleton set must be an $ic$-partner of a set of cardinality $2$. Therefore, using Theorem \ref{the-del}, we deduce that for any two $ic$-partners $C$ and $D$, it holds that $\lvert C \cup D\rvert =3$. On the other hand, $v_3$ and $v_6$ are not present in any independent dominating set of cardinality $3$, a contradiction.\\
The partition $\lbrace \lbrace  v_1,v_3, v_6 \rbrace, \lbrace  v_2,v_7 \rbrace, \lbrace v_8\rbrace,
\lbrace v_4\rbrace, \lbrace v_5\rbrace\}$ is an $ic$-partition of $P_8$. Therefore, $IC(P_8)=5$.

Now let $n=9$. Consider the path $P_9$ with $V(P_9)=\lbrace v_1,v_2,v_3,v_4,v_5,v_6,v_7,v_8,v_9\rbrace$. By Lemma \ref{lempath} and Observation \ref{obs1}, we have $IC(P_n)\leq 6$. Now we show that $IC(P_9)\neq 6$. Suppose that $IC(P_9)=6$. Let $\pi$ be an $IC(P_9)$-partition. There exist three cases. 

\textbf{Case 1.} $\pi$ consists of a set (name $A$) of cardinality $4$ and five singleton sets. Since $\gamma_i (P_9)=3$, each singleton set must be an $ic$-partner of $A$. On the other hand,
by Theorem \ref{the-del}, $A$ has at most two $ic$-partners, a contradiction. 

\textbf{Case 2.} $\pi$ consists of a set (name $A$) of cardinality $3$, a set (name $B$) of cardinality $2$  and four singleton sets. Since $\gamma_i (P_9)=3$, no two singleton sets in $\pi$ are $ic$-partners. Furthermore, by Theorem \ref{the-del}, $A$ has at most two $ic$-partners, so at least two singleton sets of $\pi$ must be $ic$-partners of $B$, which is impossible, as $P_9$ has a unique independent dominating set of cardinality $3$.  

\textbf{Case 3.} $\pi$ consists of three sets of cardinality $2$, and three singleton sets. We note that each singleton set in $\pi$ must be an $ic$-partner of a set of cardinality $2$, which is impossible, as $P_9$ has a unique independent dominating set of cardinality $3$.

The partition $\lbrace \lbrace v_1,v_3,v_5\rbrace ,\lbrace v_2,v_4,v_9\rbrace,\lbrace v_6\rbrace,\lbrace v_7\rbrace,\lbrace v_8\rbrace \rbrace$ is an $ic$-partition of $P_9$. Therefore, $IC(P_9)=5$.

Finally, Let $n\geq 10$. Consider the path $P_n$ with $V(P_n)=\lbrace v_1,v_2, \dots ,v_n \rbrace$. By Lemma \ref{lempath} and Observation \ref{obs1}, we have $IC(P_n)\leq 6$. Now we consider the sets $V_1=\{v_1,v_6\}\cup \{v_{2n-1}: n\ge 5\}$, $V_2=\{v_2,v_5\}\cup \{v_{2n}: n\ge 5\}$, $V_3=\{v_3\}$, $V_4=\{v_4\}$, $V_5=\{v_7\}$, $V_6=\{v_8\}$. Then  $\pi=\{V_1,V_2,V_3, V_4,V_5,V_6\}$ is an $ic$-partition of $P_{n}$,  where  $V_3$ and $V_4$ are $ic$-partners of $V_1$, and  $V_5$ and $V_6$ are $ic$-partners of $V_2$. So the proof is complete.
\end{proof} 
\begin{lemma} \cite{ref6} \label{l3}
For any cycle $C_n$, $C(C_n)\leq 6$.
\end{lemma}
Lemma \ref{l3} and Observation \ref{obs1} imply the following result.
\begin{lemma} \label{l4}
For any cycle $C_n$, $IC(C_n)\leq 6$.
\end{lemma}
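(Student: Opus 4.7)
The plan is to observe that this lemma is essentially a direct corollary of the two cited results and requires no new argument. First I would recall Observation \ref{obs1}, which says that whenever a graph $G$ admits an $ic$-partition, every such partition is also a $c$-partition (because an independent dominating set is in particular a dominating set), so $IC(G)\leq C(G)$. I would then apply this to $G=C_n$, obtaining $IC(C_n)\leq C(C_n)$ provided $C_n$ admits an $ic$-partition.

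Next I would invoke Lemma \ref{l3}, which asserts $C(C_n)\leq 6$ for every cycle. Chaining the two inequalities yields $IC(C_n)\leq C(C_n)\leq 6$, which is the desired bound.

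The only point that requires a brief comment is the case in which $C_n$ does not admit an $ic$-partition at all; in that situation the statement $IC(C_n)\leq 6$ is vacuous (or holds by convention, taking the maximum over an empty set to be $0$). So there is no real obstacle: the lemma is an immediate consequence of the preceding material and its proof amounts to a single display of inequalities.
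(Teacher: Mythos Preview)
Your proposal is correct and matches the paper's approach exactly: the paper simply states that Lemma~\ref{l3} and Observation~\ref{obs1} imply the result, without further argument. Your added remark about the vacuous case is a harmless extra comment not needed here (every cycle does admit an $ic$-partition, as shown later in the paper), but it does no damage.
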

\begin{lemma} \label{cycle2}
For any cycle $C_n$ with $n\geq 8$ and $n \equiv 0\ (\textrm{mod}\ 2)$, it holds that  $IC(C_n)=6$.
\end{lemma}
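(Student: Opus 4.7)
The plan is to combine the upper bound $IC(C_n)\le 6$ from Lemma~\ref{l4} with an explicit $ic$-partition of order $6$ for every even $n\ge 8$. The construction will closely imitate the path construction used in the $n\ge 10$ case of Theorem~\ref{the-path}, with small modifications to accommodate the cyclic edge $v_nv_1$.

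Concretely, I would set
\begin{align*}
V_1 &= \{v_1,v_6\}\cup\{v_{2k-1}:5\le k\le n/2\},\\
V_2 &= \{v_2,v_5\}\cup\{v_{2k}:5\le k\le n/2\},\\
V_3 &= \{v_3\},\quad V_4=\{v_4\},\quad V_5=\{v_7\},\quad V_6=\{v_8\}.
\end{align*}
For $n=8$ the index sets producing the "tail" of $V_1$ and $V_2$ are empty, so one gets $V_1=\{v_1,v_6\}$, $V_2=\{v_2,v_5\}$; for $n\ge 10$ the tails are all odd (resp.\ even) indices from $9$ to $n-1$ (resp.\ from $10$ to $n$). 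In either case $\pi=\{V_1,V_2,V_3,V_4,V_5,V_6\}$ is a vertex partition of $C_n$ into six nonempty classes.

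I would then verify three things: (i) each $V_i$ is an independent set in $C_n$; (ii) no $V_i$ is an independent dominating set, and no singleton $V_i$ dominates $C_n$, so each class does need an $ic$-partner; and (iii) the pairs $\{V_1,V_3\}$, $\{V_1,V_4\}$, $\{V_2,V_5\}$, $\{V_2,V_6\}$ are independent coalitions. For (i) the only nonconsecutive-index checks needed are within the tails (where consecutive elements are at cyclic distance $2$) and at the seams $(v_1,v_6)$, $(v_6,v_{n-1})$, $(v_2,v_5)$, $(v_5,v_n)$. For (ii), $V_1$ misses $v_3,v_4$ and $V_2$ misses $v_7,v_8$, so neither dominates, and no singleton can dominate $C_n$ for $n\ge 4$. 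For (iii), adding $v_3$ or $v_4$ to $V_1$ closes exactly the gap at $\{v_3,v_4\}$ while preserving independence; symmetrically adding $v_7$ or $v_8$ to $V_2$ closes the gap at $\{v_7,v_8\}$.

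The main obstacle is the wrap-around: one must check that $v_{n-1}\in V_1$ is non-adjacent to $v_1\in V_1$ and that $v_n\in V_2$ is non-adjacent to $v_2\in V_2$ (both are at cyclic distance $2$, using evenness of $n$ to ensure the tails land on the correct parity), and likewise that the unions in (iii) dominate the wrap-around vertices $v_{n-1},v_n,v_1$. The small boundary case $n=8$, where the tails are empty, requires a separate direct check. Apart from these cyclic bookkeeping issues, the argument is a straightforward adaptation of the path construction.
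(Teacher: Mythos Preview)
Your proposal is correct and is essentially the paper's own proof: the paper uses exactly the same partition $V_1=\{v_1,v_6\}\cup\{v_{2k-1}:k\ge 5\}$, $V_2=\{v_2,v_5\}\cup\{v_{2k}:k\ge 5\}$, $V_3=\{v_3\}$, $V_4=\{v_4\}$, $V_5=\{v_7\}$, $V_6=\{v_8\}$ (with the index written more carelessly) and the same coalition pairs $\{V_1,V_3\}$, $\{V_1,V_4\}$, $\{V_2,V_5\}$, $\{V_2,V_6\}$, together with Lemma~\ref{l4} for the upper bound. Your write-up is in fact a bit more careful than the paper's about the wrap-around and the $n=8$ boundary case.
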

\begin{proof} 
Let $V(C_n)=\lbrace v_1,v_2,\dots ,v_n\rbrace$. Consider the sets $V_1=\lbrace v_1,v_6\rbrace \cup \lbrace v_{2n-1} : n\geq 5\rbrace$, $V_2=\lbrace v_2,v_5\rbrace \cup \lbrace v_{2n} : n\geq 5\rbrace$, $V_3=\lbrace v_3\rbrace$, $V_4=\lbrace v_4\rbrace$, $V_5=\lbrace v_7\rbrace$, $V_6=\lbrace v_8\rbrace$. Then $\pi=\{V_1,V_2,V_3, V_4,V_5,V_6\}$ is an $ic$-partition of $C_n$, for $n\geq 8$,  where
$V_3$ and $V_4$ are $ic$-partners of $V_1$, and  $V_5$ and $V_6$ are $ic$-partners of $V_2$.  Hence, by Lemma \ref{l4} and Observation \ref{obs1}, we have $IC(C_n)=6$. 
\end{proof}
\begin{lemma} \label{cycle3}
For any cycle $C_n$ with $n\geq 8$ and $n \equiv 0\ (\textrm{mod}\ 3)$, it holds that $IC(C_n)=6$.
\end{lemma}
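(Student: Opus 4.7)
The upper bound $IC(C_n)\le 6$ is already supplied by Lemma \ref{l4}, so the entire task reduces to exhibiting an $ic$-partition of $C_n$ of size $6$ for every $n=3k$ with $k\ge 3$.

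My plan is to exploit the three canonical independent dominating sets of a cycle whose length is a multiple of $3$. Writing $V(C_n)=\{v_1,v_2,\dots,v_n\}$, set
\[
D_1=\{v_1,v_4,\dots,v_{3k-2}\},\quad D_2=\{v_2,v_5,\dots,v_{3k-1}\},\quad D_3=\{v_3,v_6,\dots,v_{3k}\}.
\]
Since $n\equiv 0\pmod 3$, each $D_i$ is an independent dominating set of $C_n$ of cardinality $k\ge 3$, and $\{D_1,D_2,D_3\}$ is a partition of $V(C_n)$. I will then split each $D_i$ into two nonempty pieces, for instance $A_i=\{v_i\}$ and $B_i=D_i\setminus\{v_i\}$, and take
\[
\pi=\{A_1,B_1,A_2,B_2,A_3,B_3\}.
\]
This is a vertex partition of $C_n$ of cardinality $6$, and each $A_i,B_i$ is independent as a subset of the independent set $D_i$.

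The key verification is that each of the six classes is a genuine $ic$-partner of the appropriate other class. For this I will check, first, that $A_i$ is not dominating (it is a single vertex and $n\ge 9$), and second, that $B_i$ is not dominating either. The second fact is where care is needed: I will observe that the vertex $v_i$ is not in $B_i$, and its two neighbors $v_{i-1}$ and $v_{i+1}$ lie in $D_{i-1}\cup D_{i+1}$ (indices modulo $3$), hence outside $B_i\subseteq D_i$; therefore $v_i$ is undominated by $B_i$. Since $A_i\cup B_i=D_i$ is an independent dominating set, $A_i$ and $B_i$ are $ic$-partners, making $\pi$ an $ic$-partition of size $6$.

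The only mild obstacle is the domination check for $B_i$ in small cases; this is why I need $k\ge 3$, i.e.\ $n\ge 9$, so that $B_i$ has at least two vertices while still failing to dominate. Combining the lower bound $IC(C_n)\ge 6$ from $\pi$ with $IC(C_n)\le 6$ from Lemma \ref{l4} will give $IC(C_n)=6$, completing the proof.
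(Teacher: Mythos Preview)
Your proposal is correct and follows essentially the same approach as the paper: both use the three residue-class independent dominating sets $D_1,D_2,D_3$ of $C_{3k}$ and split each into two nonempty pieces to obtain an $ic$-partition of size~$6$, then invoke Lemma~\ref{l4} for the matching upper bound. Your version is somewhat more detailed in that you specify a concrete split and verify explicitly that neither piece dominates, whereas the paper leaves this as an observation.
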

\begin{proof}
Let $V(C_n)=\lbrace v_1,v_2,\dots ,v_{3k} \rbrace$. Consider the sets $V_1 =\lbrace v_{3i+1}\rbrace$, $V_2 =\lbrace v_{3i+2} \rbrace$ and $V_3 =\lbrace v_{3i+3} \rbrace$, for $0\leq i\leq k-1$. Now for each $1\leq i\leq 3$, we  partition $V_i$ into two nonempty sets $V_{i,1}$ and $V_{i,2}$. Observe  that $V_{i,1}$ and $V_{i,2}$ are $ic$-partners.  Hence, by Lemma \ref{l4} and Observation \ref{obs1}, we have $IC(C_n)=6$.
\end{proof}
\begin{lemma} \label{cycle5}
For any cycle $C_n$ with $n\geq 8$ and $n \equiv 5\ (\textrm{mod}\ 6)$, it holds that $IC(C_n)=6$.
\end{lemma}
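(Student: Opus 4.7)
The plan is to exhibit an explicit $ic$-partition of $C_n$ with exactly six classes; together with Lemma~\ref{l4} (which already gives $IC(C_n)\le 6$), this yields $IC(C_n)=6$. Writing $n=6k+5$ with $k\ge 1$, I will construct the six classes as arithmetic progressions of common difference $3$, generalizing the $(3,2,2,2,1,1)$ partition of $C_{11}$ to the size pattern $(k+2,\,k+1,\,k+1,\,k+1,\,k,\,k)$, which sums to $n$.

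Concretely, I would define
\[
V_1=\{v_1,v_4,\ldots,v_{3k+4}\},\qquad V_2=\{v_{3k+5},v_{3k+8},\ldots,v_{6k+5}\},
\]
\[
V_3=\{v_2,v_5,\ldots,v_{3k+2}\},\qquad V_4=\{v_3,v_6,\ldots,v_{3k+3}\},
\]
\[
V_5=\{v_{3k+6},v_{3k+9},\ldots,v_{6k+3}\},\qquad V_6=\{v_{3k+7},v_{3k+10},\ldots,v_{6k+4}\},
\]
and check by inspecting residues modulo $3$ and the corresponding index ranges that these six sets partition $V(C_n)$. Since each $V_i$ is an arithmetic progression with step $3$ whose cyclic extent inside $C_n$ does not exceed $n-3$, every $V_i$ is an independent set; and since $|V_i|\le k+2 < 2k+2 = \gamma_i(C_n)$ for $k\ge 1$, no $V_i$ is itself an independent dominating set.

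The coalition pairs I would propose are $V_1$--$V_5$, $V_1$--$V_6$, $V_2$--$V_3$, and $V_2$--$V_4$. For each pair, listing the vertices of the union in cyclic order produces $2k+2$ consecutive gaps whose values lie in $\{2,3\}$, with exactly one gap of $2$ and $2k+1$ gaps of $3$ (total sum $n$). This forces the union to be independent and dominating, hence an independent dominating set of size $\gamma_i(C_n)$. Every class then has at least one coalition partner, so $\pi=\{V_1,\ldots,V_6\}$ is an $ic$-partition and $IC(C_n)\ge 6$.

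The main obstacle is carefully locating the unique $2$-gap inside each of the four coalition unions: for $V_1\cup V_5$ it appears between $v_{3k+4}$ and $v_{3k+6}$; for $V_1\cup V_6$ at the wrap-around between $v_{6k+4}$ and $v_1$; for $V_2\cup V_3$ at the wrap-around between $v_{6k+5}$ and $v_2$; and for $V_2\cup V_4$ between $v_{3k+3}$ and $v_{3k+5}$. Once these four gap computations are verified, the argument is complete.
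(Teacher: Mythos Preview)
Your proposal is correct and follows essentially the same approach as the paper: after invoking Lemma~\ref{l4} for the upper bound, you exhibit six arithmetic progressions of common difference $3$ and verify that two of them each have two $ic$-partners, by checking that the corresponding unions consist of $2k+2$ vertices with cyclic gaps all equal to $3$ except for a single gap of $2$. The paper does the same thing with a slightly different parametrization ($n=6k-1$, $k\ge 2$) and a different but equivalent choice of the six progressions (its size pattern on $C_{11}$ is $(2,2,2,3,1,1)$ rather than your $(3,2,2,2,1,1)$), but the underlying construction and verification are identical in spirit.
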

\begin{proof}
Assume $n=6k-1$, ($k\geq 2$).  Let $V(C_n)=\lbrace v_1,v_2,\dots ,v_n\rbrace$. Consider the sets
\[
A=\bigcup_{i=0}^{k-1} \lbrace v_{3i+1} \rbrace, \  A_1=\bigcup_{i=k}^{2k-1} \lbrace v_{3i+1} \rbrace, \  
A_2=\bigcup_{i=k}^{2k-1} \lbrace v_{3i} \rbrace,  
\]
\[
B=\bigcup_{i=k}^{2k} \lbrace v_{3i-1} \rbrace, \ 
B_1=\bigcup_{i=1}^{k-1} \lbrace v_{3i-1} \rbrace, \ 
B_2=\bigcup_{i=1}^{k-1} \lbrace v_{3i} \rbrace.		
\]

Let $\pi=\lbrace A,A_1,A_2,B,B_1,B_2\rbrace$. One can observe that $\pi$ is an $ic$-partition of $C_n$,  where $A_1$ and $A_2$ are $ic$-partners of $A$, and   $B_1$ and $B_2$ are $ic$-partners of $B$. Now using Lemma \ref{l4} and Observation \ref{obs1}, we have $IC(C_n)=6$.
\end{proof}
\begin{lemma} \label{cycle1}
For any cycle $C_n$ with $n\geq 8$ and $n \equiv 1\ (\textrm{mod}\ 6)$, it holds that $IC(C_n)=6$.
\end{lemma}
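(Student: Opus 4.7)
Since Lemma \ref{l4} already gives $IC(C_n)\leq 6$, the only task will be to display an $ic$-partition of $C_n$ with exactly six classes when $n=6k+1$ and $k\geq 2$. Following the template of Lemmas \ref{cycle3} and \ref{cycle5}, the plan is to construct six classes $A,A_1,A_2,B,B_1,B_2$ so that $A$ forms an independent coalition with both $A_1$ and $A_2$, and $B$ with both $B_1$ and $B_2$; then every class will have a partner and the lower bound $IC(C_n)\geq 6$ will follow.

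First I would fix the class sizes. Every IDS of $C_{6k+1}$ has $2k+1$ vertices, whose cyclic gap sequence is a permutation of $2k-1$ threes and two twos. Combining $|A|+|A_i|=|B|+|B_j|=2k+1$ with $|A|+2|A_1|+|B|+2|B_1|=6k+1$ forces $|A_1|+|B_1|=2k-1$ once we impose $|A_1|=|A_2|$ and $|B_1|=|B_2|$. I plan to take the extreme choice $|A_1|=|A_2|=1$, giving $|A|=2k$, $|B|=3$, $|B_1|=|B_2|=2k-2$, and propose
\[A=\{v_{3i+1}:0\leq i\leq 2k-2\}\cup\{v_{6k}\},\quad A_1=\{v_{6k-2}\},\quad A_2=\{v_{6k-3}\},\]
\[B=\{v_{6k-4},v_{6k-1},v_{6k+1}\},\quad B_1=\{v_{3i+2}:0\leq i\leq 2k-3\},\quad B_2=\{v_{3i+3}:0\leq i\leq 2k-3\}.\]

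The remaining work then splits into three checks. (i) The six classes partition $V(C_n)$: verified by sorting indices by residue modulo $3$ and a small case check on the tail indices $6k-4,\ldots,6k+1$. (ii) Each of $A\cup A_1$, $A\cup A_2$, $B\cup B_1$, $B\cup B_2$ is an IDS of $C_n$: compute the cyclic gap sequence of each and confirm it consists of $2k-1$ threes and two twos (all $\leq 3$, so independent and dominating). (iii) No individual class is itself an IDS: $A$ contains a gap of length $5$ from $v_{6k-5}$ to $v_{6k}$ that leaves $v_{6k-3}$ undominated, $B$ contains a gap of length $6k-4\geq 8$ from $v_{6k+1}$ back to $v_{6k-4}$, the singletons $A_1,A_2$ cannot dominate $C_n$ for $n\geq 13$, and $B_1,B_2$ have only $2k-2<2k+1$ vertices.

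The main obstacle I expect is bookkeeping rather than anything conceptual: I must confirm that the tail vertices $v_{6k-4},\ldots,v_{6k+1}$ mesh consistently with the arithmetic progressions defining $A$, $B_1$, and $B_2$, and that every new gap created by a union really is $\leq 3$. To guard against off-by-one errors I would sanity-check the construction on $k=2$ ($n=13$) and $k=3$ ($n=19$) before committing to the general write-up.
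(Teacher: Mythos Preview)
Your construction is correct and follows the same template as the paper's proof: both exhibit six explicit classes $A,A_1,A_2,B,B_1,B_2$ with $A$ partnering $A_1,A_2$ and $B$ partnering $B_1,B_2$, then invoke Lemma~\ref{l4} for the upper bound. The paper's sets are more balanced (sizes roughly $k+2,\,k-1,\,k-1,\,k+1,\,k,\,k$) whereas yours are extreme ($2k,\,1,\,1,\,3,\,2k-2,\,2k-2$), but your verification steps (i)--(iii) go through just as well; I checked $k=2,3$ and the gap sequences behave exactly as you describe.

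One correction to your motivational paragraph: it is \emph{not} true that every independent dominating set of $C_{6k+1}$ has $2k+1$ vertices. Maximal independent sets in $C_n$ range in size from $\lceil n/3\rceil$ up to $\lfloor n/2\rfloor$; for instance $\{v_1,v_3,v_5,v_7,v_9,v_{11}\}$ is an IDS of $C_{13}$ with six vertices. This does not damage your proof, since the actual argument only requires checking that your four specific unions are IDS's, not that all IDS's have a fixed size; but you should drop or weaken that sentence in the write-up, as the size constraint you derive from it is an artifact of a false premise rather than a forced condition.
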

\begin{proof}
Assume  $n=6k+1$, ($k\geq 2$). Let $V(C_n)=\lbrace v_1,v_2,\dots ,v_n\rbrace$. Consider the sets

\[
A=\left( \bigcup_{i=0}^{k} \lbrace v_{3i+1} \rbrace \right) \cup \lbrace v_{3k+3}\rbrace, \ 
A_1=\bigcup_{i=k+2}^{2k} \lbrace v_{3i} \rbrace, \ 
A_2=\bigcup_{i=k+2}^{2k} \lbrace v_{3i-1} \rbrace,	 
\]
\[
B=\left( \bigcup_{i=k+1}^{2k} \lbrace v_{3i+1} \rbrace \right) \cup \lbrace v_{3k+2}\rbrace, \ 
B_1=\bigcup_{i=1}^{k} \lbrace v_{3i-1} \rbrace, \ 
B_2=\bigcup_{i=1}^{k} \lbrace v_{3i} \rbrace.
\] 
Let $\pi=\lbrace A,A_1,A_2,B,B_1,B_2\rbrace$. One can observe that $\pi$ is an  $ic$-partition of $C_n$,  where  $A_1$ and $A_2$ are $ic$-partners of $A$, and $B_1$ and $B_2$ are $ic$-partners of $B$. Now using Lemma \ref{l4} and Observation \ref{obs1}, we have $IC(C_n)=6$.
\end{proof}
\begin{theorem} \label{th-cycle}
For the cycle $C_n$,
\[
IC(C_n) = \begin{cases}
	n & \text{if } n \leq 6; \\
	5 & \text{if } n=7; \\
	6 & \text{if } n\geq 8.
	
\end{cases}
\]
\end{theorem}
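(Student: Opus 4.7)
The plan is to handle the three regimes separately, with most of the work already packaged into the four preceding lemmas for $n \geq 8$. First I would dispose of the range $n \le 6$ by checking that the singleton partition $\pi_1$ is itself an $ic$-partition. For $C_3$, every vertex is a singleton dominating set, covered by Observation \ref{obs-comp}. For $C_4$, $C_5$, $C_6$, one verifies that every singleton $\{v_i\}$ has at least one $ic$-partner among the remaining singletons, namely any independent vertex $v_j$ such that $\{v_i,v_j\}$ dominates $C_n$; for $C_6$ the partner is $v_{i+3}$, and for $C_4, C_5$ any non-adjacent pair already dominates. Since $n$ is also the trivial upper bound on the number of classes, we get $IC(C_n)=n$.

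For $n=7$, the lower bound $IC(C_7)\ge 5$ is witnessed by the partition $\pi=\{\{v_1,v_5\},\{v_2,v_4\},\{v_3\},\{v_6\},\{v_7\}\}$ already exhibited in the preamble. The matching upper bound $IC(C_7)\le 5$ is the one step that needs a brief argument. By Lemma \ref{l4}, $IC(C_7)\le 6$, so suppose for contradiction that an $ic$-partition $\pi$ has exactly $6$ classes; then $\pi$ consists of one $2$-element set $A$ and five singletons. Since $\gamma_i(C_7)=3$, no singleton is an independent dominating set and no two singletons can form an independent coalition (their union has size $2<3$). Hence each of the five singletons must be an $ic$-partner of $A$, but by Theorem \ref{the-del} and $\Delta(C_7)=2$, the set $A$ admits at most $2$ $ic$-partners, a contradiction.

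For $n\ge 8$, the upper bound $IC(C_n)\le 6$ is exactly Lemma \ref{l4}. For the lower bound I would reduce modulo $6$ and invoke the appropriate constructive lemma: if $n$ is even apply Lemma \ref{cycle2}, if $n\equiv 3\pmod 6$ apply Lemma \ref{cycle3} (the residues $0,3\pmod 6$ together with the even case already cover everything multiple of $2$ or $3$), if $n\equiv 1\pmod 6$ apply Lemma \ref{cycle1}, and if $n\equiv 5\pmod 6$ apply Lemma \ref{cycle5}. Every residue modulo $6$ is accounted for, so $IC(C_n)\ge 6$ and hence equality.

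There is no real obstacle here: the theorem is essentially an assembly statement once the four modular lemmas and Theorem \ref{the-del} are in hand. The only piece requiring an actual argument inside the proof is the upper bound $IC(C_7)\le 5$, which is a one-line application of Theorem \ref{the-del} combined with $\gamma_i(C_7)=3$; the rest is bookkeeping.
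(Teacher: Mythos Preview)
Your proposal is correct and follows essentially the same approach as the paper: the small cases $n\le 6$ are handled by checking the singleton partition directly, the case $n=7$ combines Lemma~\ref{l4} with Theorem~\ref{the-del} and $\gamma_i(C_7)=3$ to rule out six classes (your phrasing ``all five singletons would need $A$ as partner'' is just the contrapositive of the paper's ``two singletons would have to partner each other''), and the range $n\ge 8$ is assembled from Lemmas~\ref{cycle2}--\ref{cycle1}. The only cosmetic differences are your choice of witness partition for $C_7$ and your explicit residue-by-residue bookkeeping for $n\ge 8$.
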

\begin{proof}
If $1\leq n\leq 6$, then it is easy to check that $IC(C_n)=n$. Now assume $n=7$. Consider the cycle  $C_7$  with $V(C_7)=\lbrace v_1,v_2,v_3,v_4,v_5,v_6,v_7 \rbrace$. First we show that $IC(C_7)\neq 6$. Suppose, to the contrary, that $IC(C_7)=6$. Let $\pi$ be an $IC(C_7)$-partition. We note that $\pi$ consists of  five singleton sets and a set of cardinality $2$ (name $A$). By Theorem \ref{the-del}, $A$ has at most two $ic$-partners. Hence, $\pi$ contains two singleton sets that are $ic$-partners, which contradicts the fact that  $\gamma_i (C_7)=3$. The partition  $\lbrace \lbrace v_1,v_3\rbrace ,\lbrace v_5\rbrace ,\lbrace v_6\rbrace ,\lbrace v_4 ,v_7\rbrace ,\lbrace v_2\rbrace \rbrace$ is an $ic$-partition of $C_7$, so $IC(C_7)=5$.
Furthermore, by
Lemmas \ref{cycle2}, \ref{cycle3}, \ref{cycle5} and \ref{cycle1} we have $IC(C_n)=6$, for $n\geq 8$.
\end{proof}
\section{Graphs  with small independent coalition number}
\label{sec:morethms}
In this section we investigate graphs $G$ with $IC(G) \in \lbrace 1,2,3, 4 \rbrace$. We will make use of the following two lemmas.
\begin{lemma} \label{lemfull}
	Let $G$ be a graph of order $n$ containing 
	$r\geq 1$ full vertices, and let $F=\lbrace v_1,v_2,\dots ,v_r\rbrace$ be the set of  full vertices of $G$. Then $IC(G)=k$, if and only if $IC(G[V\setminus F])=k-r$, where $r <k \leq n$.
\end{lemma}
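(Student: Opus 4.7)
My plan is to establish the stronger identity $IC(G) = IC(G[V\setminus F]) + r$, from which the claimed equivalence is immediate. I would do this by producing an explicit correspondence between $ic$-partitions of $G$ and $ic$-partitions of $G' := G[V\setminus F]$ that inserts or removes the $r$ full-vertex singletons, and verify both inequalities $IC(G') \geq IC(G) - r$ and $IC(G) \geq IC(G') + r$.

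The argument rests on three simple observations about full vertices that I would record first. (i) In any $ic$-partition of $G$, each full vertex $v_i$ must sit in its own singleton class, because every class in an $ic$-partition is independent and $v_i$ is adjacent to every other vertex; moreover $\{v_i\}$ is automatically a singleton dominating set of $G$. (ii) No class of an $ic$-partition disjoint from $F$ can be a singleton dominating set of $G$, since such a $\{v\}$ would force $v$ itself to be a full vertex; in particular $G'$ has no full vertices at all. (iii) For any $S \subseteq V\setminus F$, independence of $S$ in $G$ coincides with independence of $S$ in $G'$, and $S$ dominates $G$ if and only if $S$ dominates $G'$, because every full vertex is adjacent to every member of any nonempty subset of $V\setminus F$.

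Given these observations the two directions are essentially mirror images. For the forward direction I would start from an $IC(G)$-partition $\pi$ of size $k$, use (i) to extract the $r$ full-vertex singletons, and read off the remaining $k-r$ classes as a partition $\pi'$ of $V(G')$; observation (ii) guarantees that every class of $\pi'$ still needs an $ic$-partner inside $\pi$, and that partner cannot be one of the full-vertex singletons (being an independent dominating set forbids participation in a coalition), so it already lies in $\pi'$. Observation (iii) then lifts the coalition conditions from $G$ to $G'$ verbatim, giving $IC(G') \geq k-r$. For the converse I would take an $IC(G')$-partition $\pi'$ of size $k-r$ and adjoin the $r$ singletons $\{v_1\},\dots,\{v_r\}$; these are singleton dominating sets of $G$ by (i), and each pre-existing coalition in $\pi'$ remains a coalition in $G$ by (iii), yielding an $ic$-partition of $G$ of size $k$ and hence $IC(G)\geq IC(G')+r$.

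I do not foresee any serious obstacle; the entire argument is bookkeeping once the full-vertex singletons are recognised as forced. The only delicate point is the verification, in both directions, that failure of a class $V_i \subseteq V\setminus F$ to be an independent dominating set in one graph implies the same failure in the other — and this is exactly what observation (iii) makes automatic, since a vertex of $V\setminus F$ witnessing non-domination in $G'$ witnesses non-domination in $G$ as well.
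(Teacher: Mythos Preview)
Your proposal is correct and follows essentially the same approach as the paper: both arguments set up the correspondence that deletes or inserts the $r$ forced full-vertex singletons and verify that the remaining classes form an $ic$-partition of the other graph. Your version is in fact tidier, since you prove the single identity $IC(G)=IC(G')+r$ via two inequalities and you make explicit the three observations (in particular that a full-vertex singleton, being already an independent dominating set, cannot serve as a coalition partner) which the paper uses implicitly.
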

\begin{proof}
	Assume first that $IC(G[V\setminus F])=k-r$. Let $\pi =\lbrace V_1,V_2,\dots ,V_{k-r} \rbrace$ be an  $IC(G[V\setminus F])$-partition. Now the partition $\pi^\prime =\lbrace V_1,V_2,\dots ,V_{k-r}, \lbrace v_1\rbrace,\lbrace v_2\rbrace ,\dots ,\lbrace v_r \rbrace \rbrace$, is an $ic$-partition of $G$, so $IC(G)\geq k$. Now we prove that $IC(G)= k$. Suppose, to the contrary, that  $IC(G) > k$. Let $\pi$ be an $IC(G)$-partition. Now the partition $\pi^\prime =\pi \setminus \lbrace \lbrace v_1 \rbrace,\lbrace v_2\rbrace ,\dots ,\lbrace v_r \rbrace \rbrace$ is an $ic$-partition of $G[V\setminus F]$ such that $\lvert \pi^\prime \rvert > k-r$, a contradiction. Hence, $IC(G)= k$. Conversely, assume that $IC(G)=k$. Let $\pi$ be an $IC(G)$-partition. Now the partition  $\pi^\prime =\pi \setminus \lbrace \lbrace v_1\rbrace ,\lbrace v_2\rbrace , \dots ,\lbrace v_r\rbrace \rbrace$ is an $ic$-partition of $G[V\setminus F]$, so $IC(G[V\setminus F])\geq k-r$. Now we prove that $IC(G[V\setminus F])=k-r$. Suppose, to the contrary, that  $IC(G[V\setminus F]) > k-r$. Let $\pi$ be an $IC(G[V\setminus F])$-partition. Now the partition $\pi^\prime =\pi \cup \lbrace \lbrace v_1\rbrace ,\lbrace v_2\rbrace ,\dots ,\lbrace v_r\rbrace \rbrace $ is an $ic$-partition of $G$ such that $\lvert \pi^\prime \rvert > k$, a contradiction. Hence, $IC(G[V\setminus F])=k-r$. 
\end{proof}
\begin{lemma} \label{lemiso}
	Let $G$ be a graph containing a nonempty set  of isolated vertices $I$. If $IC(G)\geq 3$, then for any  $IC(G)$-partition $\pi$, there is a set $V_r\in \pi$ such that $V_r=I$.
\end{lemma}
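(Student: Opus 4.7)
I would prove the lemma in two stages: first, show that all vertices of $I$ lie in a single class $V_r$ of $\pi$; and second, show that $V_r=I$.

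For the first stage, two observations are crucial. Because $I\neq\emptyset$ consists of isolated vertices, $G$ has no full vertex, so $\pi$ has no singleton dominating class and every class must possess an $ic$-partner. Moreover, every independent dominating set of $G$ must contain $I$, since each isolated vertex can only be dominated by itself; hence the union of any two $ic$-partners contains $I$. If two distinct classes $V_a,V_b$ each met $I$, they would be forced to be each other's unique possible $ic$-partners, and any third class $V_c\in\pi$ (which exists since $IC(G)\ge 3$) would need an $ic$-partner whose union with $V_c$ contains all of $I$ — impossible, since $I$ is split between two distinct classes. So all of $I$ lies in a single class, which I call $V_r$.

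For the second stage, I would first dispose of the easy cases: $V_r$ cannot be a singleton dominating set (any such singleton would consist of an isolated vertex and dominate only itself, forcing $|V|=1$, contradicting $IC(G)\ge 3$), nor can $V_r$ be a non-singleton independent dominating set (which could not belong to any $ic$-partition). So $V_r$ is not independent dominating and must have an $ic$-partner; and since every $ic$-partnership must include a class containing a vertex of $I$, and $V_r$ is the only such class, every other class $V_c\in\pi$ is in fact an $ic$-partner of $V_r$.

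The crux of the argument is then a neighborhood inclusion. For each partner $V_c$, the set $V_r\cup V_c$ is independent, so there are no edges between $V_c$ and $V_r$; together with disjointness this gives $V_c\subseteq V\setminus N[V_r]$. Taking the union over $c\neq r$ yields
\[
V\setminus V_r=\bigcup_{c\neq r}V_c\subseteq V\setminus N[V_r],
\]
so $N[V_r]\subseteq V_r$. Since $V_r$ is independent, $N(V_r)\cap V_r=\emptyset$, hence $N(V_r)=\emptyset$; every vertex of $V_r$ is therefore isolated in $G$, giving $V_r\subseteq I$, and combined with $I\subseteq V_r$ we conclude $V_r=I$. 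The main obstacle is recognizing that every non-$V_r$ class must be an $ic$-partner of $V_r$ — this is what drives the neighborhood inclusion that collapses $V_r$ onto $I$.
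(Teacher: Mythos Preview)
Your proposal is correct and follows essentially the same two-stage approach as the paper's proof: first confine $I$ to a single class $V_r$ by arguing that a third class would otherwise lack an $ic$-partner, then show that every other class is forced to be an $ic$-partner of $V_r$, so that independence of each union $V_r\cup V_c$ kills all edges incident to $V_r$. Your write-up is somewhat more explicit than the paper's (which asserts without justification that ``the set in $\pi$ containing $u$ is an $ic$-partner of $V_r$''), and your set-theoretic packaging via $N[V_r]\subseteq V_r$ replaces the paper's vertex-by-vertex verification that $\deg(v)=0$, but the underlying argument is the same.
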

\begin{proof}
	First we show that all vertices in $I$ are in the same set of $\pi$. Suppose, to the contrary, that there are sets $V_i \in \pi$ and $V_j \in \pi$
	such that both $V_i$ and $V_j$ contain isolated vertices. Let $V_k \in \pi$ be an arbitrary set in $\pi$ such that $V_k \notin \lbrace V_i,V_j\rbrace$. (Since $IC(G)\geq3$, such a set exists). Then $V_k$ has no $ic$-partner, a contradiction. Now let $V_r$ be the set in $\pi$ containing isolated vertices. Further, let $v$ be an arbitrary vertex in $V_r$, and let $u\in V(G)$ be an arbitrary vertex such that $u\neq v$.  If $u\in V_r$, then $u$ is not adjacent to  $v$. Otherwise, the set in $\pi$  containing $u$ is an $ic$-partner of $V_r$, which again implies that $u$ is not adjacent to $v$. Hence, we have $deg(v)=0$. Choosing $v$ arbitrarily, we conclude that $V_r=I$.
\end{proof}
\begin{proposition}\label{prop2}
	Let $G$ be a graph of order $n$. Then
	\begin{enumerate}
		\item
		$IC(G)=1$ if and only if $G \simeq K_1$.
		\item
		$IC(G)=2$ if and only if $G\simeq K_2$ or $G\simeq \overline{K}_n$, for some $n\geq 2$.		
	\end{enumerate}
\end{proposition}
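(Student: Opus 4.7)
The plan is to handle the two equivalences separately, each by a short forward direction and a case analysis for the converse, relying throughout on the fact that the only 1- or 2-class partition of $V(G)$ has $V_1\cup V_2 = V(G)$.

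For part (1), I would first observe that if $G\simeq K_1$ with $V(G)=\{v\}$, then $\pi=\{\{v\}\}$ is an $ic$-partition since $\{v\}$ is a singleton dominating set, while no partition can have more than one class; hence $IC(K_1)=1$. Conversely, if $IC(G)=1$, the unique 1-class partition is $\pi=\{V(G)\}$, and since $V(G)$ has no other class with which to form an independent coalition, Definition \ref{icp-def} forces $V(G)$ to be a singleton dominating set, so $|V(G)|=1$.

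For part (2), the forward direction would first verify the two families: for $G\simeq K_2$ the singleton partition is an $ic$-partition of order $2$ (each class a singleton dominating set), and since $n=2$ we have $IC(G)=2$. For $G\simeq\overline{K}_n$ with $n\ge 2$, any bipartition $\pi=\{V_1,V_2\}$ of $V(G)$ into two nonempty parts is independent, and $V_1\cup V_2=V(G)$ is an independent dominating set while neither $V_i$ is dominating (any vertex outside a proper subset is undominated, since $G$ is edgeless); hence $IC(\overline{K}_n)\ge 2$. To rule out $IC(\overline{K}_n)\ge 3$, I would argue that no singleton is a dominating set (as $\delta=0$ and $n\ge 2$), so every class must have an $ic$-partner; but any union $V_i\cup V_j$ that dominates must equal $V(G)$, leaving any remaining class empty, a contradiction.

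For the converse in (2), I would take an $IC(G)$-partition $\pi=\{V_1,V_2\}$ and split into cases by whether each $V_i$ is a singleton dominating set. If both are singleton dominating sets, then each $V_i=\{v_i\}$ with $v_i$ a full vertex; since $V_1\cup V_2=V(G)$ this forces $n=2$ and the two vertices adjacent, so $G\simeq K_2$. If exactly one, say $V_1$, is a singleton dominating set, then $v_1$ is full while $V_2$ must be an $ic$-partner of $V_1$ (the only other class), so $V_1\cup V_2=V(G)$ must be an independent set; but $v_1$ being full and $n\ge 2$ contradicts independence, ruling this case out. If neither is a singleton dominating set, then each must form an independent coalition, necessarily with the other class, so $V_1\cup V_2=V(G)$ is an independent dominating set, meaning $G$ is edgeless and $G\simeq\overline{K}_n$ with $n\ge 2$ (since both parts are nonempty).

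The only mild obstacle is the bookkeeping in the mixed case of (2), where one must notice that the hypothesis $V_1\cup V_2=V(G)$ is independent clashes with $V_1$ being a singleton dominating set whenever $n\ge 2$; the rest of the argument is a direct unfolding of Definitions \ref{ic-def} and \ref{icp-def}.
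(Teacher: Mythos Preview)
Your proof is correct and follows essentially the same case analysis as the paper's. The one small difference is in the mixed case of the converse for (2): the paper observes that if $V_2$ is a singleton dominating set then $V_1$ must be independent, so $G$ is a star, and then invokes Observation~\ref{prop-star} to get $IC(G)=3\neq 2$; you instead derive the contradiction directly from the requirement that $V_1\cup V_2$ be independent while $v_1$ is full, which is a bit more self-contained since it avoids the forward reference to $IC(K_{1,n-1})=3$.
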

\begin{proof}
	$(1)$ It is clear that 	$IC(G)=1$ if and only if $G \simeq K_1$. \\
	$(2)$ If $G\simeq K_2$, then we clearly  have $IC(G)=2$. Now
	assume  $G\simeq \overline{K}_n$, for some $n\geq 2$. Let $\pi$ be an $ic$-partition of $G$. Note that no more than two sets in $\pi$  contain isolated vertices,  for otherwise, no two sets in $\pi$ are $ic$-partners. Thus, $\lvert \pi \rvert \leq 2$.   Partitioning vertices of $G$ into two  nonempty sets  yields an $ic$-partition of $G$. Hence, $IC(G)=2$. Conversely, suppose that $IC(G)=2$. Let $\pi =\lbrace V_1,V_2 \rbrace$ be an $IC(G)$-partition. If both $V_1$ and $V_2$ are singleton dominating sets, then $G\simeq K_2$. Hence, we may assume that at least one of them (say $V_1$) is not a singleton dominating set. It follows that $V_2$ is not a singleton dominating set either, for otherwise, $G$ is a star, and so by   Observation \ref{obs-star}, we  have $IC(G)=3$. Hence, $V_1$ and $V_2$ are $ic$-partners, and so $V=V_1 \cup V_2$ is an independent set. Hence, $G\simeq \overline{K}_n$, for some $n\geq 2$.
\end{proof}
\begin{definition}
	Let  $\mathcal{B}_1$ represent the family of bipartite graphs $H$ with partite sets $H_1$ and $H_2$ such that $\lvert H_1\rvert \geq 2$,  $\lvert H_2 \rvert \geq 2$, $\delta(H)\geq 1$ and $id(H)=2$.
	
\end{definition}
\begin{definition}
	For $n\geq 1$, let  $\mathcal{B}_2$ represent the family of graphs $H \cup \overline{K}_m$, where $H$ is a bipartite graph with $\delta(H)\geq 1$ and $id(H)=2$.
\end{definition}
\begin{definition}
	For $n\geq 1$, let  $\mathcal{B}_3$ represent the family of  graphs $H \cup \overline{K}_m$, where $H$ is a $3$-partite graph with $\delta(H)\geq 1$ and $id(H)=3$.
\end{definition} 
\begin{proposition}\label{prop3}
	Let $G$ be a graph of order $n$. Then $IC(G)=3$ if and only if  $G\in \lbrace K_3, K_{1,n-1}\rbrace \cup \mathcal{B}_2$.
\end{proposition}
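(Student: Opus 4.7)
The plan is to prove sufficiency and necessity separately, with the main effort in the necessity direction. For sufficiency, $IC(K_3) = 3$ follows from Observation \ref{obs-comp} and $IC(K_{1,n-1}) = 3$ from Observation \ref{prop-star}, so the remaining task is to show $IC(G) = 3$ for $G = H \cup \overline{K}_n \in \mathcal{B}_2$. I would exhibit the partition $\{H_1, H_2, V(\overline{K}_n)\}$, where $(H_1, H_2)$ is the bipartition of $H$: since $id(H) = 2$, each $H_i$ is an independent dominating set of $H$ but not of $G$, while $H_i \cup V(\overline{K}_n)$ is independent and dominates $G$, so $H_i$ and $V(\overline{K}_n)$ are $ic$-partners. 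For the upper bound, assuming an $ic$-partition of size $k \geq 4$, Lemma \ref{lemiso} forces some class to equal $V(\overline{K}_n)$, and two classes inside $V(H)$ cannot be $ic$-partners because their union misses the isolated part; the remaining $k - 1$ classes must therefore each pair with $V(\overline{K}_n)$, forming an idomatic partition of $H$ of size $k - 1 \geq 3$, which contradicts $id(H) = 2$.

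For necessity, let $\pi = \{V_1, V_2, V_3\}$ be an $IC(G)$-partition, and split into cases by the presence of full vertices and isolated vertices. If $G$ has $r \geq 1$ full vertices, then for $r < 3$ Lemma \ref{lemfull} gives $IC(G[V \setminus F]) = 3 - r$, which together with Proposition \ref{prop2} classifies the reduced graph. The case $r = 1$ yields $G[V \setminus F] = \overline{K}_n$ for some $n \geq 2$ (the $K_2$ alternative would force $G = K_3$ with all three vertices full), and hence $G = K_{1,n}$; the case $r = 2$ would force $G[V \setminus F] = K_1$ and $G = K_3$, contradicting $r = 2$; the case $r = 3$ forces $|V(G)| = 3$ and $G = K_3$ directly from the singleton-class structure of $\pi$. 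If $G$ has an isolated vertex, write $G = H \cup \overline{K}_n$ with $I$ the isolated set. Lemma \ref{lemiso} places $I$ as a class, say $V_3$, and since $V_1 \cup V_2 \subseteq V(H)$ fails to dominate $I$, the classes $V_1$ and $V_2$ cannot pair with each other and must each pair with $V_3$. This forces both to be independent dominating sets of $H$, so $H$ is bipartite with $id(H) \geq 2$; if $id(H) \geq 3$, combining any idomatic partition of $H$ with $I$ yields an $ic$-partition of $G$ of size $\geq 4$, contradicting $IC(G) = 3$. Hence $id(H) = 2$ and $\delta(H) \geq 1$ (automatic since $H$ contains no vertex isolated in $G$), giving $G \in \mathcal{B}_2$.

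The principal obstacle is the remaining case, where $G$ has neither a full vertex nor an isolated vertex. Here no $V_i$ is a singleton dominating set, so each needs an $ic$-partner, and the $ic$-partnership graph on $\{V_1, V_2, V_3\}$ has minimum degree at least $1$ and is therefore either a triangle or a path. In the triangle case every $V_i \cup V_j$ is independent, which together with the independence of each $V_i$ forces $G$ to be edgeless, contradicting $\delta(G) \geq 1$. In the path case, with $V_2$ as the hub, both $V_1 \cup V_2$ and $V_2 \cup V_3$ are independent, so all edges of $G$ lie between $V_1$ and $V_3$, leaving every vertex of $V_2$ isolated and again contradicting $\delta(G) \geq 1$. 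This closes the case and completes the characterization.
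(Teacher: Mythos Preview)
Your proof is correct and follows essentially the same strategy as the paper's: sufficiency via the explicit partition $\{H_1,H_2,I\}$ and an idomatic-partition contradiction for the upper bound, and necessity via a case split on full vertices combined with Lemma~\ref{lemfull} and Proposition~\ref{prop2}. The only organizational difference is that you separate the ``no full vertex'' case into ``has isolated vertex'' (handled directly via Lemma~\ref{lemiso}) and ``no isolated vertex'' (shown impossible by analyzing the partnership graph), whereas the paper's Case~4 treats ``no full vertex'' in one pass and derives the existence of isolated vertices from the path structure of the partnership graph; the content is the same. One small technicality: Lemma~\ref{lemiso} is stated only for $IC(G)$-partitions, so in your upper-bound argument for $\mathcal{B}_2$ you should phrase the contradiction as ``suppose $IC(G)\ge 4$ and take an $IC(G)$-partition'' rather than ``assume an $ic$-partition of size $k\ge 4$''; this is cosmetic since the lemma's proof works for any $ic$-partition of size at least~$3$.
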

\begin{proof}
	Observations \ref{obs-comp} and \ref{obs-star} imply that $IC(K_3)=3$ and that $IC(K_{1,n-1})=3$, respectively. Now let $G \in \mathcal{B}_2$.
	Let $I$ be the set of isolates vertices of $G$, and let  $\lbrace H_1,H_2\rbrace$ be a partition of $G - I$ into its partite sets. We observe that the partition $\lbrace I,H_1,H_2\rbrace$ is an $ic$-partition of $G$, so $IC(G)\geq 3$. Now we show that $IC(G)=3$. Suppose, to the contrary, that $IC(G)\geq 4$. Let $\pi=\lbrace V_1,V_2,\dots ,V_k\rbrace $ be an $IC(G)$-partition. By Lemma \ref{lemiso}, we have $I \in \lbrace V_1,V_2,\dots ,V_k \rbrace$. Assume, without loss of generality, that  $I=V_1$. Now for each $2\leq i\leq k$, $V_i$  forms an independent coalition with $V_1$, and so $V_i$  dominates $H$. Hence, the partition $\lbrace V_2,\dots ,V_k\rbrace$ is an idomatic partition of $H$, which contradicts the assumption. Hence, $IC(G)=3$. Conversely,
	let $G$ be a graph with $IC(G)=3$, and let $\pi =\lbrace V_1,V_2,V_3 \rbrace$ be an $IC(G)$-partition. We consider four cases depending on the number of full vertices of $G$.
	
	\textbf{Case 1.} $G$ has three full vertices. In this case, the sets $V_1$, $V_2$ and $V_3$ are all singleton dominating sets, so $G\simeq K_3$. 
	
	\textbf{Case 2.} $G$ has two full vertices. Note that this case never occurs. 
	
	\textbf{Case 3.} $G$ has one full vertex. Let $v_1$ be the full vertex of $G$. Lemma \ref{lemfull} implies that $IC(G - v_1)=2$. Thus, by Proposition \ref{prop2}, either $G - v_1 \simeq K_2$, implying that $G\simeq K_3$, or $G - v_1 \simeq \overline{K}_n$, for some $n\geq 2$, which implies that $G\simeq K_{1,n-1}$, for some $n\geq 3$. 
	
	\textbf{Case 4.} $G$ has no full vertex. Let $I$ be the set of isolated vertices of $G$. First we note that  $V_1$, $V_2$ and $V_3$ are not pairwise $ic$-partners, for otherwise, we have $G \simeq \overline{K}_n$, and so by Proposition \ref{prop2}, we have $IC(G)=2$, a contradiction. Hence, $\pi$ contains a set (say $V_1$) that forms an independent coalition with $V_2$ and $V_3$, while $V_2$ and $V_3$ are not $ic$-partners. Therefore, each vertex in $V_1$ is an isolated vertex, so it follows from  Lemma \ref{lemiso} that  $I=V_1$. Further, the sets $V_2$ and $V_3$ are independent dominating sets of $G[V_2 \cup V_3]$, implying that $id (G[V_2 \cup V_3])\geq 2$. It remains to show that  $id (G[V_2 \cup V_3])=2$. Suppose, to the contrary, that $id (G[V_2 \cup V_3])\geq 3$. Let $\pi^\prime =\lbrace U_1,U_2,\dots ,U_k \rbrace$, ($k\geq 3$), be an idomatic partition of $G[V_2 \cup V_3]$. Then the partition $\pi^{\prime\prime} =\lbrace U_1,U_2,\dots ,U_k,V_1\rbrace$ is clearly an $ic$-partition of $G$, implying that $IC(G)\geq 4$, a contradiction. Hence, $G\in \mathcal{B}_2$.
\end{proof}

\begin{proposition}\label{prop4}
	Let $G$ be a graph. If $IC(G)=4$, then $G \in \lbrace K_4,K_2 + \overline{K}_n, K_1 + B \rbrace \cup \mathcal{B}_1 \cup \mathcal{B}_3$, where $n\geq 2$ and $B \in \mathcal{B}_2$. 
\end{proposition}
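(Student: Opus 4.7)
The plan is to imitate the case analysis in the proof of Proposition \ref{prop3} by stratifying on $r$, the number of full vertices of $G$, and using Lemma \ref{lemfull} to reduce each case $r \geq 1$ to a smaller graph with smaller $IC$-number, then invoking Propositions \ref{prop2} and \ref{prop3}.

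For the cases $r \geq 1$: when $r = 4$, every full vertex lies in a singleton class of any $ic$-partition, so the only $IC$-partition of size $4$ consists of four full-vertex singletons and $G \simeq K_4$. When $r = 3$, Lemma \ref{lemfull} yields $IC(G[V\setminus F])=1$, hence $G[V\setminus F]\simeq K_1$ by Proposition \ref{prop2}(1); but the single remaining vertex would be adjacent to each of the three full vertices, making it full too, a contradiction. When $r = 2$, Lemma \ref{lemfull} yields $IC(G[V\setminus F])=2$, so by Proposition \ref{prop2}(2) either $G[V\setminus F]\simeq K_2$ (in which case $G\simeq K_4$ has $r=4$, contradicting $r=2$) or $G[V\setminus F]\simeq \overline{K}_n$, giving $G\simeq K_2+\overline{K}_n$. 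When $r = 1$, Lemma \ref{lemfull} yields $IC(G-v_1)=3$, and Proposition \ref{prop3} places $G-v_1\in\{K_3,K_{1,n-1}\}\cup\mathcal{B}_2$; the first two options would create additional full vertices upon joining $v_1$ (all vertices of $K_3$ and the center of $K_{1,n-1}$ become full in $G$), contradicting $r=1$, so $G\simeq K_1+B$ with $B\in\mathcal{B}_2$.

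It remains to handle $r = 0$. Fix an $IC(G)$-partition $\pi=\{V_1,V_2,V_3,V_4\}$ and split on whether $G$ has isolated vertices. If the set $I$ of isolated vertices is nonempty, Lemma \ref{lemiso} gives $V_1=I$ after relabeling. A partnership between two sets in $\{V_2,V_3,V_4\}$ would force their union---disjoint from $I$---to dominate $I$, which is impossible; hence each such set partners only with $V_1$, and thus each is an IDS of $H:=G-I$. Therefore $\{V_2,V_3,V_4\}$ is an idomatic partition of $H$, giving $id(H)\geq 3$. Conversely, $id(H)\geq 4$ would combine with $V_1$ into an $ic$-partition of $G$ of size $5$, contradicting $IC(G)=4$; so $id(H)=3$. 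With $\delta(H)\geq 1$ and the $3$-partite structure inherited from the three IDSs, $G=H\cup\overline{K}_{|I|}\in\mathcal{B}_3$.

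The main obstacle is the remaining subcase: $r=0$ with no isolated vertices. Here the strategy is to study the partnership graph $P$ on $\pi$, whose edges encode the $ic$-partnerships. Each $V_i$ has at least one partner (since no $V_i$ is a singleton dominating set), so $\delta(P)\geq 1$. The crucial observation is that a vertex of degree $3$ in $P$ has all three others as partners; since partners admit no edges between them in $G$, this would leave the corresponding independent class with no neighbors in $G$ at all, contradicting the subcase. Hence $\Delta(P)\leq 2$, and a $4$-vertex graph with $1\leq \delta(P)\leq \Delta(P)\leq 2$ must be one of $2K_2$, $P_4$, $C_4$. Each of these contains a perfect matching $\{V_iV_j,V_kV_l\}$, and the associated IDSs $V_i\cup V_j$ and $V_k\cup V_l$ give an idomatic partition of $G$ of size $2$. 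Thus $id(G)\geq 2$, and Theorem \ref{the-doma} (with $r=0$) forces $id(G)\leq 2$, hence $id(G)=2$. Together with $\delta(G)\geq 1$ and both parts having cardinality at least $2$, this places $G$ in $\mathcal{B}_1$, completing the case analysis.
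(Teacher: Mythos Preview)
Your proof is correct and follows essentially the same strategy as the paper's: split on whether $G$ has a full vertex (reducing via Lemma~\ref{lemfull} and Propositions~\ref{prop2}--\ref{prop3}), then on whether $G$ has isolated vertices (using Lemma~\ref{lemiso}), and finally bound $id(G)$ via Theorem~\ref{the-doma}. Your finer stratification on the exact number $r$ of full vertices and your use of the auxiliary ``partnership graph'' $P$ in the final subcase are cleaner repackagings of the paper's more ad~hoc arguments, but the underlying ideas coincide.
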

\begin{proof}
	Let $\pi=\lbrace V_1,V_2,V_3,V_4\rbrace$ be an $ic$-partition of $G$. We consider two cases.

	\textbf{Case 1.} $G$ has a full vertex. Let  $v_1$ be a full vertex of $G$. Lemma \ref{lemfull} implies that $IC(G - v_1)=3$. Thus, by Proposition \ref{prop3}, we have $G - v_1 \simeq K_3$, implying  that $G\simeq K_4$, or $G - v_1 \simeq K_{1,n}$, for some $n\geq 2$, implying that $G\simeq K_2 + \overline{K}_n$, for some $n\geq 2$, or $G -  v_1 \in \mathcal{B}_2$, which implies that   $G\simeq K_1 + B$, where $B\in \mathcal{B}_2$. 
	
	\textbf{Case 2.} $G$ has no full vertex. First  assume that $G$ contains a nonempty set $I$ of isolated vertices. Then, by Lemma \ref{lemiso}, we have $I \in \pi$. Without loss of generality, assume that $I=V_4$. Now for each $1\leq i\leq 3$, $V_i$ must form an independent coalition with $I$. Thus, $U=G[V_1 \cup V_2 \cup V_3]$ is a 3-partite graph with $id (U)\geq 3$. Since $IC(G)=4$, the case $id(U) >3$ is impossible. Hence, $id (U)=3$, and so $G\in \mathcal{B}_3$. Now assume that $G$ contains no isolated vertex. Since $G$ has neither full vertices nor isolated vertices, each set of $\pi$  has either one or two $ic$-partners. If there is a set of $\pi$, (say $V_1$)  having one $ic$-partner, (say $V_2$), then  it follows that $V_3$ and $V_4$ are  $ic$-partners, and so $G$ is a bipartitie graph with partite sets $V_1\cup V_2$ and $V_3 \cup V_4$. Otherwise, assume, without loss of generality, that $V_2$ and $V_3$ are $ic$-partners of $V_1$. It follows that $V_4$ has  an $ic$-partner in $\lbrace V_2,V_3\rbrace$. By symmetry, we may assume that $V_4$ and $V_3$ are $ic$-partners. Then $G$ is again a bipartitie graph with partite sets $V_1\cup V_2$ and $V_3 \cup V_4$.  Now using Theorem \ref{the-doma}, we have $id(G)=2$, and so  $G\in \mathcal{B}_1$.
\end{proof}

\section{Graphs with large independent coalition number}

Our main goal in this section is to investigate structure of graphs $G$ of order $n$  with $IC(G)=n$, under specified conditions. In addition, we will characterize all trees $T$ of order $n$ with $IC(T)=n-1$. 
Let us begin with an observation that characterizes all disconnected graphs $G$ of order $n$ with $IC(G)=n$. 

\begin{observation} \label{dis-n}
	Let $G$ be a disconnected graph of order $n$. Then  $IC(G)=n$ if  and only if $G\simeq K_s \cup K_r$, for some  $s\geq 1$, and  $r\geq 1$.
\end{observation}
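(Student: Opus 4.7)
The plan is to handle the two directions separately. For the backward direction ($\Leftarrow$), suppose $G\simeq K_s\cup K_r$ with $s,r\ge 1$, and verify that the singleton partition $\pi_1$ is an $ic$-partition. For any $v\in K_s$ and $u\in K_r$, the singleton $\{v\}$ fails to dominate the $K_r$ side and $\{u\}$ fails to dominate the $K_s$ side, yet $\{v,u\}$ is independent (no edges between components) and dominates $G$ (since $v$ is adjacent to all of $K_s\setminus\{v\}$ and $u$ to all of $K_r\setminus\{u\}$). Thus every class of $\pi_1$ has an $ic$-partner, so $IC(G)\ge n$; the reverse inequality $IC(G)\le n$ is trivial.

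For the forward direction ($\Rightarrow$), assume $G$ is disconnected (so $n\ge 2$) with $IC(G)=n$. Since a partition of an $n$-set into $n$ classes must be $\pi_1$, the hypothesis forces $\pi_1$ itself to be an $ic$-partition. I would then analyze each singleton $\{v\}$ using Definition~\ref{icp-def}: either $\{v\}$ is a singleton dominating set, which would make $v$ a full vertex and hence $G$ connected (ruled out since $n\ge 2$ and $G$ is disconnected), or $\{v\}$ has an $ic$-partner $\{u\}$, i.e., $\{v,u\}$ is an independent dominating set. So every vertex lies in an independent dominating set of size exactly~$2$.

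The crux is to extract the structural consequence of this last property. The key observation is that if $\{v,u\}$ dominates $G$, then every connected component of $G$ must contain a vertex of $\{v,u\}$, because a component disjoint from $\{v,u\}$ has no edges to either vertex and so is undominated. Hence $G$ has at most two components, and since it is disconnected, exactly two, say $C_1$ and $C_2$, with $v\in C_1$ and $u\in C_2$ (any independent dominating $2$-set splits one per component). It follows that $v$ must dominate all of $C_1$ by itself (since $u$ contributes nothing to $C_1$), meaning $v$ is adjacent to every other vertex of $C_1$. Applying this to each $v\in C_1$ shows $C_1$ is complete; symmetrically $C_2$ is complete, giving $G\simeq K_s\cup K_r$ with $s=|C_1|$, $r=|C_2|$.

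The main obstacle is simply the component-counting step — one must resist the temptation to assume the $ic$-partner of $\{v\}$ lies in a specific component, and instead derive it from the domination condition. Once the "exactly two components" fact is in hand, the rest (completeness of each component, and the edge case $s=r=1$ corresponding to $G\simeq\overline{K_2}$, which is consistent with Proposition~\ref{prop2}) is immediate.
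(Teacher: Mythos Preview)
Your proof is correct. The paper states this result as an observation without proof, so there is no argument to compare against; your write-up supplies a clean justification of both directions. The only stylistic remark is that your parenthetical about $s=r=1$ and Proposition~\ref{prop2} is not needed for the argument itself, since the case $G\simeq\overline{K_2}$ is already covered by the general reasoning in both directions.
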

Now we introduce two sufficient conditions for a  graph $G$ of order $n$ to have independent coalition number $n$.  
\begin{observation}
	If $G$ is a  graph of order $n$  with  $\alpha (G)=2$, then  $IC(G)=n$.
\end{observation}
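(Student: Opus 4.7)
The plan is to show that the singleton partition $\pi_1 = \{\{v_1\}, \{v_2\}, \ldots, \{v_n\}\}$ is itself an $ic$-partition of $G$; this immediately gives $IC(G) \geq n$, and since no partition of an $n$-element set has more than $n$ blocks, equality follows. So the entire task reduces to verifying that every singleton block satisfies one of the two clauses in Definition \ref{icp-def}.

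Fix a vertex $v \in V(G)$. I would split into two cases. First, if $v$ is a full vertex, then $\{v\}$ is a singleton dominating set, and the first clause of Definition \ref{icp-def} is satisfied. Second, suppose $v$ is not full. Then there exists a vertex $u \neq v$ with $u \notin N(v)$, so $\{u, v\}$ is an independent set. The key step is to invoke the hypothesis $\alpha(G) = 2$: since $\{u,v\}$ is independent of size $2 = \alpha(G)$, it is a maximum independent set and hence a maximal independent set; every maximal independent set is dominating, so $\{u, v\}$ is an independent dominating set of $G$.

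It remains to check that $\{v\}$ and $\{u\}$ actually form an independent coalition as in Definition \ref{ic-def}, i.e.\ that neither singleton is an independent dominating set. Clearly $\{v\}$ is not, since $v$ is non-full. Moreover $u$ cannot be full either, because $u$ is non-adjacent to $v$. Thus both singletons fail to be independent dominating sets individually, while their union $\{u,v\}$ is an independent dominating set, so $\{v\}$ forms an independent coalition with $\{u\}$ inside $\pi_1$.

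Since every singleton block of $\pi_1$ satisfies one of the two clauses of Definition \ref{icp-def}, the singleton partition is an $ic$-partition, giving $IC(G) \geq n$, and combined with the trivial upper bound $IC(G) \leq n$, we conclude $IC(G) = n$. I do not anticipate any real obstacle here: the content is entirely the observation that under $\alpha(G) = 2$, every pair of non-adjacent vertices is automatically an independent dominating set, which is exactly what is needed to pair up the singletons.
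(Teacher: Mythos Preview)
Your proof is correct and follows essentially the same approach as the paper: both show that the singleton partition $\pi_1$ is an $ic$-partition by observing that any two non-adjacent vertices form an independent dominating set when $\alpha(G)=2$. Your version simply spells out more of the details (the full-vertex case, the maximal-independent-set argument, and the check that neither singleton alone dominates) that the paper leaves implicit.
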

\begin{proof}
	Let $G$ be a graph of order $n$ such that $\alpha (G)=2$. Consider the singleton partition $\pi_1$ of $G$. Note that for any two non-adjacent vertices $v$ and $u$ in $V(G)$, the sets $\lbrace v\rbrace$ and $\lbrace u\rbrace$ in $\pi_1$, are $ic$-partners. Hence, $\pi_1$ is an $ic$-partition of $G$, and so $IC(G)=n$.
	
\end{proof}
\begin{observation}
	Let $G$ be a  graph of order $n$. If $G$ admits a partition of its vertices into two maximal cliques, then  $IC(G)=n$.
\end{observation}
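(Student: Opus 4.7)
The plan is to exhibit the singleton partition $\pi_1$ as an $ic$-partition. Write the given partition as $V(G)=A\cup B$, where $A$ and $B$ are disjoint maximal cliques; if either is empty then $G\simeq K_n$ and the result follows immediately from Observation \ref{obs-comp}, so I may assume both are nonempty. What needs to be shown is that for every vertex $v$, the singleton $\{v\}\in\pi_1$ either dominates $G$ on its own or has an $ic$-partner in $\pi_1$.

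Pick an arbitrary $v\in V(G)$ and, by symmetry, assume $v\in A$. The crucial step is the following observation driven entirely by the maximality of $B$: the vertex $v$ must have at least one non-neighbor $u\in B$, since otherwise $B\cup\{v\}$ would be a strictly larger clique than $B$, contradicting maximality. I claim $\{v\}$ and $\{u\}$ are $ic$-partners. Disjointness is clear from $A\cap B=\emptyset$; the pair $\{v,u\}$ is independent because $vu\notin E(G)$; it dominates $G$ because $v$ is adjacent to every other vertex of $A$ (as $A$ is a clique) and $u$ is adjacent to every other vertex of $B$; and neither singleton alone is an independent dominating set, since $\{v\}$ fails to dominate $u$ while $\{u\}$ fails to dominate $v$.

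Applying the same argument to every vertex — where for $v\in B$ one instead invokes the maximality of $A$ to produce a non-neighbor in $A$ — shows that every element of $\pi_1$ participates in an independent coalition. Hence $\pi_1$ is an $ic$-partition of order $n$, and therefore $IC(G)=n$. No real obstacle appears in this argument; the entire proof reduces to the one-line use of clique maximality to guarantee a missing cross-edge, which supplies the required $ic$-partner for each singleton.
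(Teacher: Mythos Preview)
Your proof is correct. The paper states this observation without proof, so there is nothing to compare against; your argument via the singleton partition, using maximality of each clique to supply a non-neighbor in the other part, is exactly the natural one-line justification the authors presumably had in mind.
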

\subsection{Graphs $G$ with $\delta(G)=1$ and $IC(G)=n$}
In this subsection, we characterize graphs $G$ with $\delta (G)=1$ and $IC(G)=n$. We need the following definition.
\begin{definition}
	Let $G$ be a graph of order $n$, ($n\geq 3$), and let $\delta(G)=1$. Furthermore, let $x$ be a pendant vertex of $G$, and let $y$ be the support vertex of $x$. Then $G \in \mathcal{F}$ if  and only if $V(G) \setminus \lbrace x,y\rbrace$ induces clique.
\end{definition}
\begin{theorem} \label{l5}
	Let $G$ be a  graph of order $n$ with  $\delta(G)=1$.  Then $IC(G)=n$ if  and only if either $G\simeq K_2$, or $G \in \mathcal{F}$.
\end{theorem}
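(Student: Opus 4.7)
The plan is to reinterpret the equality $IC(G)=n$ as the statement that the singleton partition $\pi_1$ is itself an $ic$-partition, i.e.\ every vertex $v\in V(G)$ is either a full vertex (so that $\{v\}$ alone dominates) or has a non-neighbor $u$ for which $\{u,v\}$ is an independent dominating set. The case $G\simeq K_2$ is immediate, so the core of the argument deals with $n\geq 3$, and throughout I fix a pendant $x$, its support $y$, and write $C=V(G)\setminus\{x,y\}$.

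For the sufficiency direction ($G\in\mathcal{F}\Rightarrow IC(G)=n$), I would go through the three types of vertex. For any $v\in C$ the pair $\{x,v\}$ works as an independent coalition: $x$ is not adjacent to $v$, $x$ covers $N[x]=\{x,y\}$, and $v$, lying in the clique $C$, covers all of $C$; the pendant $x$ pairs with any vertex of $C$ by the same computation. The support $y$ is handled separately: either $y$ is adjacent to every vertex of $C$ and is therefore full, so that $\{y\}$ dominates, or else there exists $w\in C$ with $w\not\sim y$ and $\{y,w\}$ is a dominating independent set via the same two-part covering argument. This exhibits $\pi_1$ as an $ic$-partition and gives $IC(G)=n$.

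The heart of the proof is the necessity direction. Assume $IC(G)=n$, $\delta(G)=1$, $G\not\simeq K_2$, and suppose for contradiction that $C$ contains a non-edge $v_1v_2$. First I would note that no vertex of $C$ can be full, because $x$'s only neighbor is $y$ forces every $v\in C$ to satisfy $v\not\sim x$. Hence each $v_i$ has an $ic$-partner $u_i$, and since the pair $\{u_i,v_i\}$ must dominate $x$ while $v_i\not\sim x$, the partner is pinned down to $u_i\in\{x,y\}$. The branch $u_i=x$ is strong: it forces $\{x,v_i\}$ to dominate $C$ through $v_i$ alone, making $v_i$ adjacent to every other vertex of $C$ and in particular to $v_{3-i}$, contradicting $v_1\not\sim v_2$. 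So both partners are $y$, producing $y\not\sim v_1$ and $y\not\sim v_2$. Applying the condition that $\{y,v_1\}$ dominates $G$ to the vertex $v_2$ yields $v_2\in N[y]\cup N[v_1]$, and since $v_2$ is distinct from and nonadjacent to both $y$ and $v_1$, this is the desired contradiction. Consequently $C$ induces a clique and $G\in\mathcal{F}$.

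The main obstacle is isolating the forced dichotomy $u_i\in\{x,y\}$ and exploiting the asymmetry between its two branches: the branch $u_i=x$ already yields the clique property on its own, whereas the branch $u_i=y$ must be knocked out by a second application of the domination hypothesis. A small sanity check is that the argument never collapses for small orders; once $n\geq 3$ the set $C$ is nonempty, which is all the room the partner-selection needs.
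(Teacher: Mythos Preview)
Your proof is correct and rests on the same key observation as the paper's: in the singleton partition, every vertex of $C=V\setminus\{x,y\}$ must find its $ic$-partner inside $N[x]=\{x,y\}$, and this forces $C$ to be a clique. Your necessity argument is a bit more streamlined than the paper's---you argue directly by contradiction from a hypothetical non-edge $v_1v_2$ in $C$, whereas the paper splits into four cases according to whether $A=N(y)\setminus\{x\}$ and $B=C\setminus A$ are empty (invoking Observation~\ref{dis-n} for the disconnected case)---but the underlying mechanism is identical.
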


\begin{proof}
	Obviously, $IC(K_2)=2$. Now assume that $G \in \mathcal{F}$. Let $x$ be a pendant vertex of $G$, and let $y$ be the support vertex of $x$. Further, let $U=V(G) \setminus \lbrace x,y \rbrace$. Note that $U$ contains no full vertex. If $y$ is a full vertex, then $G $ is obtained from the complete graph $K_{n-1}$, where one of its vertices is adjacent to a leaf. In this case,  we clearly have $IC(G)=n$. Thus, we may assume that $y$ is not a full vertex, that is, there is a vertex $u \in U$ such that $u$ is not adjacent to $y$. Then it is easy to verify that the sets $\lbrace y\rbrace$ and $\lbrace u\rbrace$ are $ic$-partners, and that  each vertex in  $U \setminus \lbrace u\rbrace$ forms an independent coalition with $\lbrace x\rbrace$. Therefore, $IC(G)=n$.
	Conversely, suppose that $G$ is a graph with $\delta(G)=1$ and $IC(G)=n$. Let $x$ be a leaf of $G$, and let $y$ be the support vertex of $x$. Consider the singleton partition $\pi_1$ of $G$. Note that  each set in $\pi_1 \backslash \lbrace \lbrace x\rbrace ,\lbrace y\rbrace \rbrace$ must be an $ic$-partner of $\lbrace x\rbrace$ or $\lbrace y\rbrace$, to dominate $x$. Let $A=N(y)\setminus \lbrace x\rbrace$, and $B=V(G)\setminus (\lbrace x,y\rbrace \cup A)$. We consider four cases.
	
	\textbf{Case 1.} $A=\emptyset$ and $B=\emptyset$. In this case, we have $G\simeq K_2$. 
	
	\textbf{Case 2.} $A=\emptyset$ and $B\neq \emptyset$. By Observation \ref{dis-n}, we have $G \simeq K_2 \cup K_r$, for some $r\geq 1$. Thus, $G \in \mathcal{F}$.
	
	\textbf{Case 3.} $A \neq \emptyset$ and $B= \emptyset$. For each $v\in A$, the set $\lbrace v\rbrace$ cannot be an $ic$-partner of $\lbrace y\rbrace$, so it must be an $ic$-partner of $\lbrace x\rbrace$. This implies that $A$ induces a clique. Hence, $G \in \mathcal{F}$.
	
	\textbf{Case 4.} $A \neq \emptyset$ and $B \neq \emptyset$.
	For each $v\in A$, the set $\lbrace v\rbrace$ cannot be an $ic$-partner of $\lbrace y\rbrace$, so it must be an $ic$-partner of $\lbrace x\rbrace$. This implies that $[A ,B]$ is full and that $A$ induces a clique. Now for each vertex $u \in B$, in order for the set $\lbrace u \rbrace$ to be an $ic$-partner of $\lbrace x\rbrace$ or $\lbrace y\rbrace$,  $u$ must be adjacent to all other vertices in $B$. Hence, $B$ induces a clique, and so $G \in \mathcal{F}$, which completes the proof.
\end{proof}

As an  immediate  result from Theorem \ref{l5} we have: 
\begin{corollary} \label{tree-n}
	Let $T$ be a tree  of order $n$. Then $IC(T)=n$ if and only if  $T \in \lbrace P_1,P_2,P_3,P_4\rbrace$.
\end{corollary}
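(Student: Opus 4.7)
The plan is to derive Corollary \ref{tree-n} directly from Theorem \ref{l5}, together with the fact that trees are acyclic (in particular triangle-free), and Theorem \ref{the-path} for the reverse direction.

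First, for the \emph{if} direction, Theorem \ref{the-path} already gives $IC(P_n) = n$ for $n \leq 4$, so $P_1, P_2, P_3, P_4$ all have independent coalition number equal to their order.

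For the \emph{only if} direction, suppose $T$ is a tree of order $n$ with $IC(T)=n$. If $n=1$, then $T\simeq P_1$ and we are done. If $n\geq 2$, then $T$ has at least one leaf, so $\delta(T)=1$, and Theorem \ref{l5} applies: either $T\simeq K_2\simeq P_2$, or $T\in\mathcal{F}$. In the latter case, fixing a leaf $x$ with support vertex $y$, the set $U:=V(T)\setminus\{x,y\}$ induces a clique in $T$. Since $T$ is a tree, it contains no $K_3$, so $|U|\leq 2$, giving $n\leq 4$.

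It then remains to identify which trees of order $3$ or $4$ belong to $\mathcal{F}$. For $n=3$ the only tree is $P_3$, which indeed lies in $\mathcal{F}$. For $n=4$, write $U=\{u_1,u_2\}$ with $u_1u_2\in E(T)$; the leaf $x$ is adjacent only to $y$, and if $y$ were adjacent to both $u_1$ and $u_2$ then $y,u_1,u_2$ would form a triangle. Hence $y$ is adjacent to exactly one of $u_1,u_2$, producing the path $x$--$y$--$u_i$--$u_j$, i.e.\ $T\simeq P_4$. The main (minor) obstacle is just this last case distinction, but it is forced by the triangle-free nature of trees. Combining everything, $T\in\{P_1,P_2,P_3,P_4\}$, completing the proof.
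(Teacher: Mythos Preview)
Your proof is correct and follows exactly the route the paper intends: the corollary is stated there as an immediate consequence of Theorem~\ref{l5}, and you have simply written out the details (triangle-freeness forces $|U|\le 2$, then the small cases are checked). The only cosmetic point is that when you conclude ``$y$ is adjacent to \emph{exactly} one of $u_1,u_2$'' you should make explicit that connectivity of $T$ gives the ``at least one'' half, but this is a trivial addition.
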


\subsection{Triangle-free graphs $G$ with $IC(G)=n$}
In this subsection, we   characterize graphs $G$  of order $n$ with $g(G)=4$ and $IC(G)=n$. This will lead to characterization of all triangle-free graphs $G$  of order $n$ with $IC(G)=n$. We will make use the following lemmas.
\begin{lemma} \label{girth7}
	Let $G$ be a triangle-free graph of order $n$ with $IC(G)=n$. Then $g(G)\leq 6$.
\end{lemma}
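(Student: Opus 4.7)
The plan is to exploit the fact that $IC(G)=n$ forces the singleton partition $\pi_1$ to be an $ic$-partition, and hence that every vertex $v$ is either a full vertex of $G$ or has a non-neighbor $u$ such that $\{v,u\}$ is an independent dominating set of $G$. I will assume for contradiction that $g(G)\geq 7$ and show that a vertex on a shortest cycle of $G$ admits no such partner.

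First I would fix a shortest cycle $C=v_1 v_2\cdots v_g v_1$ of $G$. Being shortest, $C$ is chordless: any edge $v_i v_j$ of cyclic distance at least $2$ on $C$ would produce a cycle of length strictly less than $g$. In particular, the only neighbors of $v_1$ on $C$ are $v_2$ and $v_g$, and $v_1$ is not a full vertex (for instance $v_3$ is a non-neighbor). Hence $\{v_1\}$ needs an $ic$-partner $\{u\}$, and the resulting pair $\{v_1,u\}$ must dominate every $v_j$ with $3\leq j\leq g-1$, none of which lies in $N[v_1]$.

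Next I would split into two cases according to whether $u$ lies on $C$. If $u=v_k$ with $k\in\{3,\dots,g-1\}$, then chord-freeness forces $N(v_k)\cap V(C)=\{v_{k-1},v_{k+1}\}$, so $v_k$ dominates only the three cycle-vertices $v_{k-1},v_k,v_{k+1}$. Since $g\geq 7$, the range $\{v_3,\dots,v_{g-1}\}$ has size $g-3\geq 4$, so some $v_j$ in this range falls outside $\{v_{k-1},v_k,v_{k+1}\}$; such $v_j$ would need an edge to $v_1$ or $v_k$, again a forbidden chord. If instead $u$ is off $C$, then $u$ must be adjacent to every $v_j$ with $3\leq j\leq g-1$; in particular $u\sim v_3$ and $u\sim v_4$, which together with the edge $v_3 v_4$ of $C$ produces a triangle, contradicting triangle-freeness.

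The main obstacle is ruling out the on-cycle case, where one might hope $u=v_k$ still reaches the far side of $C$ through off-cycle neighbors; the point I would emphasize is that each ``missed'' cycle-vertex $v_j$ itself must have a neighbor in $\{v_1,v_k\}$, so the shortest-cycle hypothesis immediately closes that possibility. In both subcases $\{v_1\}$ has no $ic$-partner, so $\pi_1$ is not an $ic$-partition, contradicting $IC(G)=n$. This yields $g(G)\leq 6$.
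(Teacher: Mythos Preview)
Your proof is correct and follows essentially the same approach as the paper: pick a vertex on a shortest cycle, argue its singleton $ic$-partner cannot lie on the cycle, and then show an off-cycle partner would have to be adjacent to two consecutive cycle vertices, producing a triangle. The only cosmetic difference is that the paper dispatches the on-cycle case in one line by invoking $\gamma_i(C)\geq 3$ for a chordless cycle of length $g\geq 7$, whereas you carry out the equivalent counting argument ($g-3\geq 4$ targets versus at most three vertices covered by $v_k$) explicitly.
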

\begin{proof}
	Let $G$ be a graph of order $n$ with $IC(G)=n$, and suppose, to the contrary, that $g(G)\geq 7$. Let  $C \subseteq G$ be a cycle of order $g(G)$. Consider an arbitrary vertex $v\in V(C)$.  Note that $\gamma_i (C)\geq 3$, and so  $\lbrace v\rbrace$ is not an $ic$-partner of any set $\lbrace u\rbrace \subset V(C)$. Therefore, it must be an $ic$-partner of a set   $\lbrace u\rbrace \subseteq V(G) \setminus V(C)$. It follows that, $\lbrace u\rbrace$  dominates $V(C) \setminus N_c [v]$, which implies that $G$ contains triangles, a contradiction.
\end{proof}

\begin{lemma} \label{lem-girth6}
	Let $G$ be a  graph of order $n$ with $g(G)=6$. Then $IC(G)=n$ if and only if $G \simeq C_6$.
\end{lemma}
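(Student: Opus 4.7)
The plan is to handle the easy direction first. If $G \simeq C_6$, then Theorem \ref{th-cycle} immediately gives $IC(G)=6=n$, so the \emph{if} direction is settled.

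For the converse, suppose $IC(G)=n$. Then the partition achieving this maximum must have every class be a singleton, so the singleton partition $\pi_1$ is an $ic$-partition of $G$. Because $g(G)=6$, the graph $G$ is triangle-free and contains no $4$-cycle or $5$-cycle; in particular, no vertex of $G$ is full. Hence for every $v \in V(G)$ there exists an $ic$-partner $u$ with $uv \notin E(G)$ and $N[v] \cup N[u] = V(G)$. The next step is to fix a shortest cycle $C = v_1 v_2 v_3 v_4 v_5 v_6 v_1$ and determine the $ic$-partner of each $v_i$. For $\{v_1\}$, the partner $u$ must dominate $V(C)\setminus N[v_1] = \{v_3,v_4,v_5\}$. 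I would rule out $u \notin V(C)$ by noting that adjacency of $u$ to two consecutive vertices of $C$ creates a triangle, while adjacency of $u$ to $v_3$ and $v_5$ (but not $v_4$) produces the $4$-cycle $u v_3 v_4 v_5 u$; and among $V(C)$ the cases $u=v_3$ and $u=v_5$ fail to dominate $v_5$ or $v_3$ (a chord $v_3v_5$ would force the triangle $v_3 v_4 v_5$). The only surviving candidate is $u=v_4$. Applying the same reasoning to each $v_i$, the unique $ic$-partner of $v_i$ is the antipode $v_{i+3}$ (indices mod $6$).

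The remaining task is to show $V(G) = V(C)$. I would assume $x \in V(G)\setminus V(C)$; since $\{v_i,v_{i+3}\}$ dominates $G$ for each $i=1,2,3$, the vertex $x$ has at least one neighbor in each antipodal pair $\{v_1,v_4\}$, $\{v_2,v_5\}$, $\{v_3,v_6\}$. A brief case analysis then leads to a short cycle: for instance, if $x$ is adjacent to $v_1$, then triangle-freeness forbids $x v_2$ and $x v_6$, forcing $x$ adjacent to $v_5$ and $v_3$, which gives the $4$-cycle $x v_1 v_2 v_3 x$; the case $x v_4 \in E(G)$ is symmetric. Hence no external vertex exists, and $V(G)=V(C)$. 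Finally, any chord of $C$ would create a $3$-, $4$-, or $5$-cycle, so $G$ has exactly the six edges of $C$, i.e. $G \simeq C_6$.

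The main obstacle is keeping the two case analyses (the identification of the $ic$-partner of each $v_i$, and the elimination of an external vertex $x$) exhaustive and free of hidden configurations. The key leverage is that the girth hypothesis $g(G)=6$ simultaneously forbids triangles, $4$-cycles, and $5$-cycles, so every candidate edge we wish to exclude is killed by at least one of these three forbidden short cycles; once this is organized carefully, the argument becomes essentially mechanical.
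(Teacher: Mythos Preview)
Your argument is correct. The paper's proof and yours share the same skeleton (fix a $6$-cycle $C$, use $g(G)=6$ to kill short cycles, conclude $V(G)=V(C)$), but the two diverge in how they eliminate an external vertex. The paper picks an arbitrary $v\in V(G)\setminus V(C)$ and looks at \emph{its} $ic$-partner $u$: if $u\in V(C)$ then $v$ must dominate three consecutive cycle vertices, giving a triangle; if $u\notin V(C)$ then $\{u,v\}$ must dominate all of $C$, and a short count shows this forces a triangle or a $4$-cycle. You instead first pin down the $ic$-partners of the \emph{cycle} vertices themselves, proving that the only possible partner of $v_i$ is the antipode $v_{i+3}$; this yields three independent dominating pairs $\{v_1,v_4\},\{v_2,v_5\},\{v_3,v_6\}$, which then force any external $x$ to have a neighbour in each pair and hence to sit on a $4$-cycle. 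Your route is a little longer but more transparent, and you make explicit the final step (no chords on $C$) that the paper leaves implicit.

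One small redundancy: in ruling out an $ic$-partner $u\notin V(C)$ for $v_1$, the sub-case ``$u$ adjacent to $v_3$ and $v_5$ but not $v_4$'' cannot arise, since $u$ must dominate $v_4$ and $u\neq v_4$ forces $uv_4\in E(G)$; the consecutive-neighbours triangle already covers everything. This does not affect the validity of the proof.
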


\begin{proof}
	Let $G$ be a graph of order $n$ with $g(G)=6$. If $G \simeq C_6$, then by Theorem \ref{th-cycle}, we have $IC(G)=6$. Conversely, assume that $IC(G)=n$. Let  $C \subseteq G$ be a cycle of order $6$, and suppose, to the contrary, that $V(G) \setminus V(C) \neq \emptyset$. Consider an arbitrary vertex $v\in V(G) \setminus V(C)$. If $\lbrace v\rbrace$ is an $ic$-partner of a set $\lbrace u\rbrace \subset V(C)$, then $\lbrace v\rbrace$ must dominate $V(C) \setminus N_c [u]$, which implies that $G$ contains triangles, a contradiction. Otherwise,
	$\lbrace v\rbrace$ must be an $ic$-partner of a set $\lbrace u\rbrace \subset V(G) \setminus V(C)$. Now since $\lbrace u,v\rbrace$ dominates $C$, it follows that $G$ contains triangles, or induces cycles of order 4, a contradiction.
	
\end{proof}
Our next result can be established almost the same way as  Lemma \ref{lem-girth6},  so we state it without proof.

\begin{lemma} \label{lem-girth5}
	Let $G$ be a   graph of order $n$ with $g(G)=5$. Then $IC(G)=n$ if and only if $G \simeq C_5$.
\end{lemma}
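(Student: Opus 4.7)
The plan is to mirror the proof of Lemma~\ref{lem-girth6}, replacing the independence-number bookkeeping for $C_6$ with that of $C_5$. First, the easy direction is immediate: if $G\simeq C_5$ then Theorem~\ref{th-cycle} gives $IC(G)=5=n$.

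For the converse, I would start from the observation that when $IC(G)=n$ the singleton partition $\pi_1$ must be an $ic$-partition, so each vertex $v\in V(G)$ is either a full vertex of $G$ or has an $ic$-partner $\{u\}$ with $\{u,v\}$ an independent dominating set. I would then fix a $5$-cycle $C$ of $G$ (which is necessarily induced, since any chord would create a triangle or a $4$-cycle and violate $g(G)=5$) and, toward a contradiction, consider an arbitrary vertex $v\in V(G)\setminus V(C)$. The possibility that $v$ is a full vertex is ruled out immediately, since $v$ would then be adjacent to two consecutive vertices of $C$ and form a triangle; hence $\{v\}$ has an $ic$-partner $\{u\}$, and $\{u,v\}$ must dominate $V(C)$.

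From here the argument splits on the location of $u$. If $u\in V(C)$ then $v$ is forced to dominate the two vertices of $V(C)\setminus N_C[u]$; in the induced $C_5$ these two vertices are adjacent, so joining $v$ to both produces a triangle, contradicting $g(G)=5$. If instead $u\in V(G)\setminus V(C)$, then $N(u)\cap V(C)$ and $N(v)\cap V(C)$ must together cover all five vertices of $C$, while triangle-freeness forces each of these sets to be independent in the induced $C_5$ and therefore of cardinality at most $\alpha(C_5)=2$; since $2+2<5$ this is impossible. Both cases failing means $V(G)=V(C)$, and since $C$ is induced we conclude $G\simeq C_5$. I do not anticipate a serious obstacle: the only subtleties are verifying that the fixed $5$-cycle is induced and dismissing the full-vertex case for $v$, both of which are short, and the rest is the same ``covering vs.\ independence number'' counting that drives Lemma~\ref{lem-girth6}.
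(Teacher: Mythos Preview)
Your proposal is correct and follows essentially the same route the paper intends: the paper explicitly states that Lemma~\ref{lem-girth5} ``can be established almost the same way as Lemma~\ref{lem-girth6}'' and omits the proof, and your argument is precisely that adaptation---fix an induced $5$-cycle, take $v$ outside it, and split on whether its singleton $ic$-partner lies on or off the cycle. Your counting $2+2<5$ for the off-cycle case is a clean way to phrase what the paper would call ``$G$ contains triangles,'' and your explicit dismissal of the full-vertex possibility is a harmless refinement.
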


In order to characterize graphs $G$  of order $n$ with $IC(G)=n$ and $g(G)=4$, we need the following definitions.
\begin{definition}
	Let $\mathcal{K}_0$ represent a bipartite graph  with partite sets $H_1=\lbrace v_1,v_2,v_3,v_4\rbrace$ and $H_2=\lbrace u_1,u_2,u_3,u_4\rbrace$ such that for each $1\leq i\leq 4$, $v_i$ is adjacent to all vertices in $H_2$, except $u_i$. (see Figure 2).
\end{definition}

\begin{definition}
	Let $\mathcal{K}$ represent a family of $4$-partite graphs with partite sets $H_1 =\lbrace v_1,v_2,v_3,v_4\rbrace$, $H_2=\lbrace u_1,u_2,u_3,u_4\rbrace$, $H_3= \lbrace n_1,n_2,\dots ,n_k\rbrace$ and $H_4=\lbrace m_1,m_2,\dots ,m_k\rbrace$, for  $k\geq 1$, with the following properties:
	\begin{itemize}
		\item
		$[H_1,H_3]$ is full and $[H_2,H_4]$ is full,
		\item
		$[H_1,H_4]$ is empty and $[H_2,H_3]$ is empty,
		\item
		For each $1\leq i\leq 4$, $v_i$ is adjacent to all vertices in $H_2$, except $u_i$,
		\item
		For each $1\leq i\leq k$, $n_i$ is adjacent to all vertices in $H_4$, except $m_i$. 
		
	\end{itemize}
	Figure 3 illustrates such a graph for $k=3$.
\end{definition}
\begin{figure}[!htbp]
	\centering
	\begin{tikzpicture}[scale=.3, transform shape]
		\node [draw, shape=circle,fill=black] (v1) at  (0,4) {};
		\node [draw, shape=circle,fill=black] (v2) at  (4,4) {};
		\node [draw, shape=circle,fill=black] (v3) at  (8,4) {};
		\node [draw, shape=circle,fill=black] (v4) at  (12,4) {};
		\node [draw, shape=circle,fill=black] (u1) at  (0,0) {};
		\node [draw, shape=circle,fill=black] (u2) at  (4,0) {};
		\node [draw, shape=circle,fill=black] (u3) at  (8,0) {};
		\node [draw, shape=circle,fill=black] (u4) at  (12,0) {};
		\node [scale=3] at (0,4.7) {$v_1$};
		\node [scale=3] at (4,4.7) {$v_2$};
		\node [scale=3] at (8,4.7) {$v_3$};
		\node [scale=3] at (12,4.7) {$v_4$};
		\node [scale=3] at (0,-0.7) {$u_1$};
		\node [scale=3] at (4,-0.7) {$u_2$};
		\node [scale=3] at (8,-0.7) {$u_3$};
		\node [scale=3] at (12,-0.7) {$u_4$};
		\draw(v1)--(u3);
		\draw(v1)--(u4);
		\draw(v1)--(u2);
		\draw(v2)--(u1);
		\draw(v2)--(u3);
		\draw(v2)--(u4);
		\draw(v3)--(u1);
		\draw(v3)--(u2);	
		\draw(v3)--(u4);
		\draw(v4)--(u1);
		\draw(v4)--(u2);
		\draw(v4)--(u3);
		
	\end{tikzpicture}
	\caption{The graph $\mathcal{K}_0$}\label{pic2}
\end{figure}
\begin{figure}[!htbp]
	\centering
	\begin{tikzpicture}[scale=.3, transform shape]
		\node [draw, shape=circle,fill=black] (v1) at  (0,0) {};
		\node [draw, shape=circle,fill=black] (v2) at  (2,2) {};
		\node [draw, shape=circle,fill=black] (v3) at  (4,4) {};
		\node [draw, shape=circle,fill=black] (v4) at  (6,6) {};
		\node [draw, shape=circle,fill=black] (u4) at  (15,6) {};
		\node [draw, shape=circle,fill=black] (u3) at  (17,4) {};
		\node [draw, shape=circle,fill=black] (u2) at  (19,2) {};
		\node [draw, shape=circle,fill=black] (u1) at  (21,0) {};
		\node [draw, shape=circle,fill=black] (n1) at  (2,-7) {};
		\node [draw, shape=circle,fill=black] (n2) at  (4,-9) {};
		\node [draw, shape=circle,fill=black] (n3) at  (6,-11) {};
		\node [draw, shape=circle,fill=black] (m2) at  (17,-9) {};
		\node [draw, shape=circle,fill=black] (m1) at  (19,-7) {};
		\node [draw, shape=circle,fill=black] (m3) at  (15,-11) {};
		\node [scale=3] at (0,0.7) {$v_1$};
		\node [scale=3] at (2,2.7) {$v_2$};
		\node [scale=3] at (4,4.7) {$v_3$};
		\node [scale=3] at (6,6.7) {$v_4$};
		\node [scale=3] at (15,6.7) {$u_4$};
		\node [scale=3] at (17,4.7) {$u_3$};
		\node [scale=3] at (19,2.7) {$u_2$};
		\node [scale=3] at (21,0.7) {$u_1$};
		\node [scale=3] at (2,-7.7) {$n_1$};
		\node [scale=3] at (4,-9.7) {$n_2$};
		\node [scale=3] at (6,-11.7) {$n_3$};
		\node [scale=3] at (15,-11.7) {$m_3$};
		\node [scale=3] at (17,-9.7) {$m_2$};
		\node [scale=3] at (19,-7.7) {$m_1$};
		\draw(v1)--(u3);
		\draw(v1)--(u4);
		\draw(v1)--(u2);
		\draw(v2)--(u1);
		\draw(v2)--(u3);
		\draw(v2)--(u4);
		\draw(v3)--(u1);
		\draw(v3)--(u2);	
		\draw(v3)--(u4);
		\draw(v4)--(u1);
		\draw(v4)--(u2);
		\draw(v4)--(u3);
		\draw(n1)--(m2);	
		\draw(n2)--(m1);
		\draw(n1)--(m3);	
		\draw(n2)--(m3);
		\draw(n3)--(m1);	
		\draw(n3)--(m2);
		
		\draw(v1)--(u3);
		\draw(n1)--(v1);
		\draw(n1)--(v2);
		\draw(n1)--(v3);
		\draw(n1)--(v4);
		\draw(n2)--(v1);
		\draw(n2)--(v2);
		\draw(n2)--(v3);	
		\draw(n2)--(v4);
		\draw(n3)--(v1);
		\draw(n3)--(v2);
		\draw(n3)--(v3);	
		\draw(n3)--(v4);
		
		\draw(m1)--(u1);
		\draw(m1)--(u2);
		\draw(m1)--(u3);
		\draw(m1)--(u4);	
		\draw(m2)--(u1);
		\draw(m2)--(u2);
		\draw(m2)--(u3);
		\draw(m2)--(u4);
		\draw(m3)--(u1);
		\draw(m3)--(u2);
		\draw(m3)--(u3);
		\draw(m3)--(u4);
	\end{tikzpicture}
	\caption{The graph in $\mathcal{K}$ for $k=3$}\label{pic3}
\end{figure}

\begin{theorem} \label{the-girth4}
	Let $G$ be a graph  of order $n$ with $g(G)=4$. Then $IC(G)=n$ if and only if $G\in  \{C_4, \mathcal{K}_0\} \cup \mathcal{K}$.
\end{theorem}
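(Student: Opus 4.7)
The plan is to handle each direction separately.

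For the ``if'' direction, $IC(C_4)=4=n$ by Theorem \ref{th-cycle}. For $G=\mathcal{K}_0$ or $G\in \mathcal{K}$, I verify directly that the singleton partition $\pi_1$ is an $ic$-partition. In each such graph, $\{v_i\}$ pairs with $\{u_i\}$ for $1\le i\le 4$ (and $\{n_j\}$ with $\{m_j\}$ for $1\le j\le k$ in $\mathcal{K}$): by construction $v_i\not\sim u_i$, and $N[v_i]\cup N[u_i]=V$ because $v_i$ covers $(H_2\setminus\{u_i\})\cup H_3$ while $u_i$ covers $(H_1\setminus\{v_i\})\cup H_4$, with the $\{n_j,m_j\}$ case symmetric. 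Hence $IC(G)\ge n$, so $IC(G)=n$.

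For the ``only if'' direction, assume $\pi_1$ is an $ic$-partition, so every vertex $v$ has an $ic$-partner $u=p(v)$. If $n=4$, triangle-freeness together with $g(G)=4$ forces $G=C_4$. Suppose $n\ge 5$. Since $u\notin N[v]$ and $\{v,u\}$ is dominating, $u$ is a universal vertex of $G[V\setminus N[v]]$, and triangle-freeness forces the other vertices of $V\setminus N[v]$ to be independent. Write $X_v=N(v)\cap N(u)$, $Y_v=N(v)\setminus N[u]$ and $Z_v=N(u)\setminus N[v]$, so $V=\{v,u\}\sqcup X_v\sqcup Y_v\sqcup Z_v$. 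Since $N(v)$ and $N(u)$ are independent, $X_v\cup Y_v$ and $X_v\cup Z_v$ are independent; hence every $x\in X_v$ has $N(x)=\{v,u\}$.

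The central claim is $X_v=\emptyset$ for every $v$. Suppose $x\in X_v$ and consider $p(x)$, which must dominate $V\setminus\{x,v,u\}$. If $p(x)\in X_v$, then $N(p(x))=\{v,u\}$ forces $V=\{v,u,x,p(x)\}$ and $G=C_4$, contradicting $n\ge 5$. If $p(x)\in Y_v$, the independence of $X_v\cup Y_v$ and the fact $p(x)\not\sim u$ give $N(p(x))\subseteq\{v\}\cup Z_v$; the dominance condition then collapses to $X_v=\{x\}$ and $Y_v=\{p(x)\}$. Examining the partner of any $z\in Z_v$ next forces $|Z_v|=1$, and the resulting five-vertex graph is either $P_5$ (if $z\not\sim p(x)$) or $C_5$ (if $z\sim p(x)$); both are excluded by $g(G)=4$. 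The case $p(x)\in Z_v$ is symmetric, establishing $X_v=\emptyset$.

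With $X_v=\emptyset$ for every $v$, the decomposition $V=\{v,u\}\sqcup Y_v\sqcup Z_v$ makes $G$ bipartite with parts $A=\{v\}\cup Z_v$ and $B=\{u\}\cup Y_v$, and $v$'s only non-neighbor in $B$ is $u$. Running this argument at every partner pair forces $G\cong K_{m,m}-M$, where $M$ is a perfect matching realizing the partnership and $m=|A|=|B|$. The girth-$4$ condition eliminates $m\le 3$ (since $K_{3,3}-M=C_6$ and smaller cases have no $4$-cycle), so $m\ge 4$. The case $m=4$ gives $\mathcal{K}_0$, and for $m\ge 5$ a relabeling that puts $4$ matched pairs into $H_1\cup H_2$ and assigns the other $k=m-4$ pairs to $H_3\cup H_4$ (with their sides swapped) exhibits $G$ as a member of $\mathcal{K}$. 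The main obstacle is the case analysis establishing $X_v=\emptyset$: it requires carefully combining triangle-freeness with the dominance condition on $p(x)$ to collapse the structure to $C_4$, $P_5$, or $C_5$, the last two of which are ruled out by the girth hypothesis.
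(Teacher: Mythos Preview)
Your argument is correct and follows a genuinely different route from the paper's. The paper fixes a $4$-cycle $xyzt$ and splits into cases according to whether the opposite vertices $\{x\}$ and $\{z\}$ are $ic$-partners; when they are not, it locates partners $e,f,g,h$ for $x,z,y,t$, sets $A=N(x)\setminus\{y,t\}$, $B=N(e)\setminus\{z\}$, and runs a subcase analysis on whether $A'=A\setminus\{f\}$ and $B'=B\setminus\{g,h\}$ are empty, eventually reading off the $\mathcal{K}_0$/$\mathcal{K}$ structure directly. Your approach instead starts from an arbitrary partner pair $(v,u)$, introduces the clean decomposition $X_v=N(v)\cap N(u)$, $Y_v$, $Z_v$, and proves the single structural lemma $X_v=\emptyset$; from this you obtain the uniform description $G\cong K_{m,m}-M$ and then match it to $\{\mathcal{K}_0\}\cup\mathcal{K}$ by relabeling. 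Your route is more conceptual and yields the compact ``complete bipartite minus a perfect matching'' picture in one stroke, while the paper's route stays closer to the explicit $H_1,\dots,H_4$ description and avoids the extra relabeling step at the end. One small remark: in your $p(x)\in Y_v$ subcase, the dominance of $\{x,p(x)\}$ already forces $p(x)$ to be adjacent to every $z\in Z_v$, so the $P_5$ alternative never actually occurs---only $C_5$ does---but this does not affect the validity of your contradiction. You might also state explicitly that $g(G)=4$ together with Observation~\ref{dis-n} forces $G$ to be connected, which you use implicitly when identifying the bipartition at different vertices.
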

\begin{proof}
	It is easy to check that $IC(C_4)=4$ and that $IC(\mathcal{K}_0)=8$. Now
	let $G\in \mathcal{K}$. We observe that for each $1\leq i\leq 4$, $\lbrace v_i\rbrace$ and $\lbrace u_i\rbrace$ are $ic$-partners, and that for each $1\leq i\leq k$, $\lbrace n_i\rbrace$ and $\lbrace m_i\rbrace$ are $ic$-partners. Thus, $IC(G)=n$. Conversely, let $G$ be a graph  of order $n$ with $g(G)=4$ and $IC(G)=n$, and let $C$ be a cycle of $G$ of order 4 with  $V(C)=\lbrace x,y,z,t\rbrace$ and $E(C)=\lbrace xy,yz,zt,tx\rbrace$. If $G=C$, then the desired result follows. Hence, we assume that $G\neq C$. Since $x$ is adjacent to $y$ and $t$, neither $\lbrace y\rbrace$ nor  $\lbrace t\rbrace$ is an $ic$-partner of $\lbrace x\rbrace$. Now consider two cases. 
	
	\textbf{Case 1.} $\lbrace x\rbrace$ and $\lbrace z\rbrace$ are  $ic$-partners. In this case, $G$ is dominated by $\lbrace x,z\rbrace$. Let $A=N(x) \setminus \lbrace y,t\rbrace$ and $B=N(z) \setminus \lbrace y,t\rbrace$. If $A \neq \emptyset$, (say $v \in A$), then it is not difficult to check that $\lbrace v\rbrace$ has no $ic$-partner. Thus, $A=\emptyset$, and so by symmetry, we have $B =\emptyset$. Hence, $G \simeq C_4$. 
	
	\textbf{Case 2.} $\lbrace x\rbrace$ and $\lbrace z\rbrace$ are not $ic$-partners.
	Let $\lbrace e\rbrace$ be an $ic$-partner of $\lbrace x\rbrace$.  Since $\lbrace x,e\rbrace$ dominates $G$ and  $z$ is not adjacent to  $x$ , it must be adjacent to $e$.  Let $A=N(x) \setminus \lbrace y,t\rbrace$ and $B=N(e) \setminus \lbrace z\rbrace$. It is not difficult to verify that $A \cap B = \emptyset$. Now if $A  = \emptyset$, then $\lbrace z\rbrace$ cannot form an independent coalition with any other set, so $A \neq \emptyset$. Let $\lbrace f\rbrace \subseteq A$ be an $ic$-partner of $\lbrace z\rbrace$. We note that if a set $\lbrace g\rbrace$ forms an independent coalition with $\lbrace y \rbrace$, then $g\in B$. Further, if  a set $\lbrace h\rbrace$ forms an independent coalition with $\lbrace t \rbrace$ then $h\in B$. Let $\lbrace g\rbrace$ and $\lbrace h\rbrace$ be  $ic$-partners of $\lbrace y\rbrace$ and $\lbrace t\rbrace$, respectively. Observe that $\lbrace g\rbrace \neq \lbrace h\rbrace$. Now  let $A^\prime =A\setminus \lbrace f\rbrace$ and $B^\prime =B\setminus \lbrace g,h\rbrace$. There exist the following subcases. 
	
	\textbf{Subcase 2.1.} $A^\prime =\emptyset$ and $B^\prime =\emptyset$. In this case, we have $G \simeq \mathcal{K}_0$. 
	
	\textbf{Subcase 2.2.} $A^\prime = \emptyset$ and $B^\prime \neq \emptyset$.
	Let $v\in B^\prime$. One can verify that $\lbrace v\rbrace$ cannot form an independent coalition with any other set. Thus, this case is impossible. 
	
	\textbf{Subcase 2.3.} $A^\prime \neq \emptyset$ and $B^\prime = \emptyset$. Let $v\in A^\prime$. One can verify that $\lbrace v\rbrace$ cannot form an independent coalition with any other set. Thus, this case is impossible. 
	
	\textbf{Subcase 2.4.} $A^\prime \neq \emptyset$ and $B^\prime \neq \emptyset$. Let $v\in A^\prime$. If a set $\lbrace u\rbrace$ forms an independent coalition with $\lbrace v\rbrace$, then $u\in B^\prime$. Furthermore, for each vertex  $ u \in B^\prime$, $\lbrace u\rbrace$ cannot form an independent coalition with more than one sets $\lbrace v\rbrace \subseteq A^\prime$. Thus, $\lvert A^\prime \rvert \leq \lvert B^\prime \rvert$.  Using a similar argument, we deduce that  $\lvert B^\prime \rvert \leq \lvert A^\prime \rvert$, and so $\lvert A^\prime \rvert = \lvert B^\prime \rvert$. Consequently, the following statements hold in the graph $G$:
	\begin{itemize}
		\item
		$G[\lbrace x,y,z,t,e,f,g,h\rbrace]$ is a bipartite graph with partite sets $V_1=\{ x,z,g,h\}$ and $V_2=\{ y,t,e,f\}$, which is isomorphic to $\mathcal{K}_0$,
		\item 
		$[V_1,A^\prime]$ is full and $[V_2,B^\prime]$ is full,
		\item
		$[V_1,B^\prime]$ is empty and $[V_2,A^\prime]$ is empty,
		\item
		$G[A^\prime \cup B^\prime]$ is a bipartite graph with partite sets $A^\prime$ and $B^\prime$ such that $deg_{G[A^\prime \cup B^\prime]}(v)=\lvert A^\prime \rvert -1=\lvert B^\prime \rvert -1$, for each $v\in A^\prime \cup B^\prime$.
	\end{itemize}
	Hence, $G\in \mathcal{K}$ and the proof is complete.	
	
\end{proof}
Using Observation \ref{dis-n},
Corollary \ref{tree-n}, Lemmas \ref{girth7} ,\ref{lem-girth6} and \ref{lem-girth5} , and Theorem \ref{the-girth4}, we infer the following result.

\begin{corollary}
	Let $G$ be a triangle-free graph of order $n$. Then $IC(G)=n$ if and only if 
	$G\in \lbrace C_4,C_5,C_6,P_1,P_2,P_3,P_4,\overline{K}_2,K_1 \cup K_2,K_2 \cup K_2,\mathcal{K}_0\rbrace \cup  \mathcal{K} $.
\end{corollary}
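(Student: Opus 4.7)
The proof is essentially an assembly of the preceding classification results, so my plan is to organize it by a clean dichotomy on the structural type of $G$. The sufficiency direction requires no new work: $IC(C_n)=n$ for $n\in\{4,5,6\}$ is given by Theorem \ref{th-cycle}; $IC(P_n)=n$ for $n\in\{1,2,3,4\}$ by Theorem \ref{the-path} (or Corollary \ref{tree-n}); for the three disconnected graphs $\overline{K}_2$, $K_1\cup K_2$, $K_2\cup K_2$ the equality $IC(G)=n$ follows from Observation \ref{dis-n}; and for $\mathcal{K}_0$ and every $G\in\mathcal{K}$ it follows from Theorem \ref{the-girth4}. So I would dispose of sufficiency in a single sentence citing these four references.

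For necessity, let $G$ be triangle-free with $IC(G)=n$. First I split on connectivity. If $G$ is disconnected, Observation \ref{dis-n} forces $G\simeq K_s\cup K_r$; the triangle-free hypothesis then forces $s,r\leq 2$, yielding precisely $\overline{K}_2$, $K_1\cup K_2$, and $K_2\cup K_2$. So assume $G$ is connected. If $G$ is acyclic (i.e.\ a tree), then Corollary \ref{tree-n} restricts $G$ to $\{P_1,P_2,P_3,P_4\}$. Otherwise $G$ contains a cycle, so the girth $g(G)$ is defined; triangle-freeness gives $g(G)\geq 4$, and Lemma \ref{girth7} gives $g(G)\leq 6$. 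Each of the three remaining girth values is then handled by an existing result: Lemma \ref{lem-girth6} forces $G\simeq C_6$ when $g(G)=6$, Lemma \ref{lem-girth5} forces $G\simeq C_5$ when $g(G)=5$, and Theorem \ref{the-girth4} forces $G\in\{C_4,\mathcal{K}_0\}\cup\mathcal{K}$ when $g(G)=4$. Collecting the outputs of these cases gives exactly the listed family.

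Since every individual step is a direct application of a prior result, the only genuine issue is to confirm that the case split is exhaustive and non-overlapping in a way that covers every member of the stated family. The one point that deserves a careful sentence is noting that the triangle-free restriction in the disconnected case truly cuts the general $K_s\cup K_r$ classification down to the three small graphs, since $s\geq 3$ or $r\geq 3$ would introduce a triangle. With that noted, no calculation is required beyond invoking the theorems above.
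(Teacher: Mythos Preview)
Your proposal is correct and takes essentially the same approach as the paper, which simply cites Observation \ref{dis-n}, Corollary \ref{tree-n}, Lemmas \ref{girth7}, \ref{lem-girth6}, \ref{lem-girth5}, and Theorem \ref{the-girth4} without further detail. Your explicit case split (disconnected, then connected acyclic, then connected by girth value) is a clean and faithful organization of exactly those references, and the observation that triangle-freeness cuts $K_s\cup K_r$ down to $s,r\le 2$ is the one small point the paper leaves implicit.
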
	
	
	\subsection{Trees $T$ with $IC(T)=n-1$}
	The following theorem characterizes all trees $T$ of order $n$ with $IC(T)=n-1$.
	\begin{theorem}\label{the-t=n-1}
		Let $T$ be a tree of order $n$. Then $IC(T)=n-1$ if and only if $T\in \lbrace P_5,P_6,S_{1,2},K_{1,3}\rbrace$.
	\end{theorem}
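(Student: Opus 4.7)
For sufficiency I verify each of the four trees directly from earlier results: Theorem~\ref{the-path} gives $IC(P_5)=4$ and $IC(P_6)=5$, Observation~\ref{obs-dstar} gives $IC(S_{1,2})=4$, and Observation~\ref{prop-star} gives $IC(K_{1,3})=3$, each equal to $n-1$.

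For necessity, let $T$ be a tree with $IC(T)=n-1$, and fix an $IC(T)$-partition $\pi$; by definition $\pi$ consists of exactly one two-element block $A=\{a,b\}$ and $n-2$ singleton blocks. If $T$ has a full vertex then $T\simeq K_{1,n-1}$ and Observation~\ref{prop-star} gives $IC(T)=3$, forcing $n=4$ and $T=K_{1,3}$. Otherwise $T$ has no full vertex, so every singleton of $\pi$ requires an $ic$-partner. The pivotal structural sub-claim is that no vertex of $T$ has three or more pendant neighbours: if a support $s$ had pendant neighbours $\ell_1,\ell_2,\ell_3$, then some $\ell_i$ lies in a singleton block whose partner $X$ must meet $N[\ell_j]$ for each $j\neq i$; since $s\in X$ would destroy independence with $\ell_i$, all $\ell_j$ ($j\neq i$) must lie in $X$, so $X=A$ and $A\cup\{\ell_i\}=\{\ell_1,\ell_2,\ell_3\}$ must be an IDS, forcing $T=K_{1,3}$, a contradiction. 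Consequently every support of $T$ carries at most two pendant neighbours.

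I then case-split by the number of leaves of $T$. If $T$ has exactly two leaves then $T$ is a path and Theorem~\ref{the-path} forces $n\in\{5,6\}$. If $T$ has exactly three leaves then $T$ has a unique vertex of degree~$3$ from which three paths (the ``arms'') emanate. If $i(T)\geq 3$, then no two singletons can be $ic$-partners, so all singletons are partnered with $A$, and Theorem~\ref{the-del} forces $n-2\leq\Delta(T)=3$, i.e.\ $n\leq 5$; yet no such tree of order at most $5$ satisfies $i(T)\geq 3$. If $i(T)=2$, a direct inspection of which three-leaf trees admit a $2$-IDS shows that the only candidates are $S_{1,2}$ and the tree $T'$ of order $6$ obtained from $P_5$ by attaching a pendant vertex to its second vertex; for $T'$, enumerating all independent non-IDS $2$-subsets of $V(T')$ that could play the role of $A$ shows that in every case some singleton is left without an $ic$-partner, so only $T=S_{1,2}$ survives.

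Finally, if $T$ has at least four leaves, then for any leaf $\ell\notin A$ there are at least three other leaves, so by the sub-claim no singleton can serve as the common support of three of them; hence the $ic$-partner of $\{\ell\}$ is necessarily $A$, and $\{a,b,\ell\}$ is an IDS. Applying this to two different leaves outside $A$ shows that both of their supports lie in $A$, and a short case analysis on how many leaves $A$ contains (the sub-claim rules out $A$ containing any leaf) forces $a$ and $b$ to be the two distinct supports of $T$, each carrying exactly two leaves, with $a\not\sim b$. Writing $d=d_T(a,b)\geq 2$: if $d\leq 3$ then $A=\{a,b\}$ is itself an IDS, contradicting its role in $\pi$; if $d\geq 4$ then some inner vertex of the $a$-to-$b$ path lies outside $N[a]\cup N[b]\cup N[\ell]$ for any pendant neighbour $\ell$ of $a$ or $b$, contradicting that $\{a,b,\ell\}$ must be an IDS. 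Hence having four or more leaves is impossible, and combining the cases yields $T\in\{K_{1,3},S_{1,2},P_5,P_6\}$. The main obstacle is the three-leaf sub-case: ruling out $T'$ requires the explicit enumeration because Theorem~\ref{the-del} does not yield a contradiction there.
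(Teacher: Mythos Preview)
Your overall strategy (casing on the number of leaves of $T$) is different from the paper's proof, which instead fixes a leaf $x$, its support $y$, sets $A=N(y)\setminus\{x\}$ and $B=V(T)\setminus(\{x,y\}\cup A)$, and then runs six subcases according to where the two vertices of the unique $2$-block lie. Your approach is more conceptual and could work, but there are two genuine problems.

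\textbf{The three-leaf subcase is incomplete.} You assert that the only three-leaf trees with $i(T)=2$ are $S_{1,2}$ (the spider with arm lengths $1,1,2$) and $T'$ (arm lengths $1,1,3$). This is false: the spider with arm lengths $1,1,4$ (order $7$) also has $i(T)=2$, since $\{c,p_3\}$ is an independent dominating set, where $c$ is the branch vertex and $p_3$ is the vertex at distance $3$ from $c$ on the long arm. (In fact the spiders $(1,1,k)$ have $i=2$ exactly for $k\le 4$.) So you must also rule out the $(1,1,4)$ spider. This can be done: if $\ell_1,\ell_2$ are the two pendant neighbours of $c$ and $\ell_1$ were a singleton, its $ic$-partner $X$ would have to contain $\ell_2$ together with a vertex dominating $p_1$ and a vertex dominating $p_4$, forcing $|X|\ge 3$; hence $A=\{\ell_1,\ell_2\}$, and then no singleton can partner $A$ for the same reason. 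But this case is missing from your write-up.

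\textbf{The four-leaf argument is internally inconsistent.} You first show that for every leaf $\ell\notin A$ the set $\{a,b,\ell\}$ is an IDS, hence independent, so the support of $\ell$ is \emph{not} in $A$. You then apply domination of $\{a,b,\ell_1\}$ to conclude that the support of $\ell_2$ \emph{is} in $A$. Since $\ell_2$ is also a leaf outside $A$, these two conclusions contradict each other immediately, and the case is finished right there; the subsequent ``short case analysis'' on whether $A$ contains leaves, and the discussion of $d_T(a,b)$, is both unnecessary and based on a configuration that has already been shown impossible. The parenthetical claim that ``the sub-claim rules out $A$ containing any leaf'' is also not justified: the sub-claim bounds the number of pendant neighbours of a vertex, which says nothing directly about the contents of $A$.
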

	\begin{proof}
		By Theorem \ref{the-path}, we have $IC(P_5)=4$ and $IC(P_6)=5$. Further, by Observation \ref{obs-star}, we have $IC(K_{1,3})=3$ and by Observation \ref{obs-dstar}, we have $IC(S_{1,2})=4$. Conversely,
		let $T$ be a tree of order $n$ with $IC(T)=n-1$, where  $x$ is a leaf , and  $y$ is the support vertex of $x$. Define $A=N(y) \setminus \{x\}$ and $B=V(G) \setminus  ( \lbrace x,y \rbrace \cup A )$. Further, let $\pi$ be an $IC(T)$-partition. Note that $\pi$ contains a set of cardinality $2$ (say $V_1=\lbrace u,v\rbrace$)  and $n-2$ singleton sets. Since $x$ and $y$ are adjacent, we have $V_1 \neq \lbrace x,y\rbrace$. Note as well that any set in $\pi$ must be an $ic$-partner of  the set containing $x$, or the set containing $y$, to dominate $x$. We consider two cases.
		
		\textbf{Case 1.}  $B= \emptyset$. If   $A=\emptyset$, then we have $T\simeq K_2$, and so  $IC(T)=2 \neq n-1$. Hence, $A \neq \emptyset$, an so  $T\simeq K_{1,n-1}$, for some $n\geq 3$. Now by Lemma \ref{l3}, we have $IC(T)=3$. Hence, $T\simeq K_{1,3}$.
		
		\textbf{Case 2.}  $B\neq \emptyset$. Since $T$ is connected, we have $A\neq \emptyset$. We divide this case into some subcases.
		
		\textbf{Subcase 2.1.} $u\in A$ and $v\in B$.
		We first show that $\lvert A\rvert =1$. Suppose, to the contrary, that $\lvert A\rvert \geq 2$. Then  there is a vertex $z\in A$ such that $z\neq u$. Since $z$ and $y$ are adjacent,  $\lbrace z\rbrace$ cannot be an $ic$-partner of  $\lbrace y\rbrace$, so it must be an $ic$-partner of  $\lbrace x\rbrace$. Since $\lbrace x\rbrace$ does not dominate $u$,  $u$ must be adjacent to $z$, which is a contradiction, since $y$, $z$ and $u$ induce a triangle.
		Now  $\lbrace y\rbrace$ cannot be an $ic$-partner of $\lbrace x\rbrace$ or $\lbrace u,v\rbrace$, so it must have an $ic$-partner in $B$. This implies that $\lvert B\rvert \geq 2$. Let $\lbrace t\rbrace \subset B$ be an $ic$-partner of $\lbrace y\rbrace$.
		we show that $B \setminus \lbrace v,t\rbrace = \emptyset$. Suppose that $B \setminus \lbrace v,t\rbrace \neq \emptyset$. Let $z\in B \setminus \lbrace v,t\rbrace$. Note that $v$ and $t$ are adjacent. Now  $\lbrace z\rbrace$ must be an $ic$-partner of $\lbrace x\rbrace$ or $\lbrace y \rbrace$, so $z$ must be adjacent to $t$ and $v$, which is a contradiction, since $z$,$t$ and $v$ induce a triangle.
		Hence, $B=\lbrace v,t\rbrace$ and so $T \simeq P_5$.
		
		\textbf{Subcase 2.2.} $\{u,v\}\subseteq B$. An  argument similar to the one presented above implies that $\lvert A\rvert =1$.
		Now we show that $B\setminus \lbrace u,v\rbrace =\emptyset$. Suppose that $B\setminus \lbrace u,v\rbrace \neq \emptyset$. Let $z \in B\setminus \lbrace u,v\rbrace$, and let $A=\lbrace t\rbrace$. Since $t$ and $y$ are adjacent,  $\lbrace t \rbrace$ cannot be an $ic$-partner of $\lbrace y \rbrace$, so it must be an $ic$-partner of   $\lbrace x\rbrace$. Thus, $t$ must be adjacent to $u$, $v$ and $z$. Now $\lbrace z\rbrace$ must be  an $ic$-partner of  $\lbrace x\rbrace$ or  $\lbrace y\rbrace$, so $z$ must be adjacent to $u$ and $v$, which is a contradiction, since $z$, $u$ and $t$ induce a triangle. Hence, $T\simeq S_{1,2}$.
		
		\textbf{Subcase 2.3.} $\lbrace u,v\rbrace \subseteq A$. We first show that $A \setminus \lbrace u,v\rbrace =\emptyset$. Suppose that $A\setminus \lbrace u,v\rbrace \neq \emptyset$. Let $z \in A\setminus \lbrace u,v\rbrace$.
		Since $z$ is adjacent to $y$,  $\lbrace z\rbrace$ must be an $ic$-partner of $\lbrace x\rbrace$, so $z$ must be adjacent to $u$ and $v$, which is a contradiction, since $z$,$u$ and $y$ induce a triangle. Now we show that $\lvert B\rvert=1$. Suppose that $\lvert B\rvert \neq 1$ . First assume $\lvert B\rvert \geq 3$. Let $z,t,w\in B$. Now $z$,$t$ and $w$ induce a triangle, since the sets containing each of them, must be an $ic$-partner of $\lbrace x\rbrace$ or $\lbrace y\rbrace$, a contradiction. Now assume  $\lvert B\rvert = 2$. Let $B=\lbrace z,t\rbrace$. Each of the sets $\lbrace z\rbrace $ and $\lbrace t\rbrace$ must be an $ic$-partner of $\lbrace x\rbrace$ or $\lbrace y\rbrace$. Thus, $z$  must be adjacent to $t$. Now  $\lbrace u,v\rbrace$ must be an $ic$-partner of $\lbrace x\rbrace$, so $z$ and $t$ must be   dominated by $\lbrace u,v\rbrace$. Now the induced subgraph $T[\lbrace u,v,z,t\rbrace]$ contains at least one cycle, a contradiction.
		Hence, we have $T\simeq S_{1,2}$.

		\textbf{Subcase 2.4.} $u=y$ and $v\in B$.
		we first show that $\lvert A\rvert =1$. Suppose that $\lvert A\rvert \geq 2$. Let $z,t\in A$. Since $z$ and $t$ are adjacent to $y$,   $\lbrace z\rbrace$ and $\lbrace t\rbrace$ cannot be an $ic$-partner of $y$, so each of them  must be an $ic$-partner of $\lbrace x\rbrace$. Thus, $z$ must be adjacent to $t$, which is a contradiction, since $z$, $t$ and $y$ induce a triangle. Now we show that $\lvert B\rvert = 2$.  Suppose that $\lvert B\rvert \neq 2$ . If  $\lvert B\rvert = 1$, then $T \simeq P_4$, a contradiction. Otherwise, let $\lbrace v,z,t\rbrace \subseteq B$ and $A=\lbrace w\rbrace$. Now $\lbrace w\rbrace $ cannot be an $ic$-partner of  $\lbrace u,v\rbrace$, so it must be an $ic$-partner of  $\lbrace x\rbrace$. Thus, $w$ must be adjacent to $v$, $z$ and $t$. Now observe that $\lbrace u,v\rbrace$ must have an $ic$-partner in $B$. Assume, without loss of generality, that $\lbrace u,v\rbrace$ and $\lbrace z\rbrace$ are $ic$-partners. This implies that $t$ is adjacent to $z$ or $v$, which is impossible, since both cases lead to existence of an induced triangle.
		Hence, $T\simeq S_{1,2}$.

		\textbf{Subcase 2.5.} $u=x$ and $v\in A$.
		We first show that $\lvert B\rvert \leq 2$. Suppose that $\lvert B\rvert \geq 3$. Let $\lbrace z,t,w\rbrace \subseteq B$. Note that  $\lbrace y\rbrace$ must have an $ic$-partner in $B$. Assume, without loss of generality, that  $\lbrace y\rbrace$ and $\lbrace z\rbrace$ are $ic$-partners. It follows that $z$ is adjacent to $t$ and $w$. Now if  $\lbrace y\rbrace$ is an $ic$-partner of  $\lbrace t\rbrace$ or  $\lbrace w\rbrace$, then $t$ must be adjacent to $w$, which is impossible, since $z$,$t$ and $w$ induce a triangle. Hence, both $t$ and $w$ must be $ic$-partners of $\lbrace u,v\rbrace$, which implies that $t$ is adjacent to $w$. Now $z$,$t$ and $w$ induce a triangle, a contradiction. Now we show that $\lvert A\rvert \leq 2$. Suppose that $\lvert A\rvert \geq 3$. 
		Let $\lbrace z,t,v\rbrace \subseteq A$. The sets $\lbrace z\rbrace$ and $\lbrace t\rbrace$ must be $ic$-partners of $\lbrace u,v\rbrace$. This implies that $z$ is adjacent to $t$. Now $z$,$t$ and $y$ induce a triangle, a contradiction.
		Further, we observe that the case $\lvert A\rvert=\lvert B\rvert=2$ is impossible. Hence, either $\lvert A\rvert=2$ and $\lvert B\rvert =1$, which implies that $T\simeq S_{1,2}$, or $\lvert A\rvert =1$ and $\lvert B\rvert=2$, which implies that $T\simeq P_5$.
		
		\textbf{Subcase 2.6.} $u=x$ and $v\in B$.
		We first show that $\lvert A\rvert =1$. Suppose  that $\lvert A\rvert \geq 2$. Let $\lbrace z,t\rbrace \subseteq A$. Now each of the sets $\lbrace z\rbrace$ and $\lbrace t\rbrace$ must be an $ic$-partner of $\lbrace u,v\rbrace$. This implies that $z$ is adjacent to $t$, which is a contradiction, since $y$,$z$ and $t$ induce a triangle. Now we show that $\lvert B \rvert \leq 3$. Suppose  that $\lvert B\rvert \geq 4$. Let  $\lbrace v,t,w,h\rbrace \subseteq B$ and $A=\lbrace z\rbrace$. Note that  $\lbrace y\rbrace$ must have an $ic$-partner in $B$. Assume, without loss of generality, that $\lbrace y\rbrace$ and $\lbrace t\rbrace$ are $ic$-partners. It follows that $t$ is adjacent to $w$, $h$ and $v$. Now $\lbrace w\rbrace$ must be an $ic$-partner of $\lbrace y\rbrace$ or $\lbrace u,v\rbrace$. One can observe that both cases lead to contradiction.
		Hence, either $\lvert B \rvert = 2$, which implies that $T \simeq P_5$, or $\lvert B \rvert = 3$, which implies that $T\simeq P_6$.
	\end{proof}
	
	\section{Discussion and conclusions}
	In Proposition \ref{claim-1}, we introduced a family of graphs admitting no $ic$-partition. This result motivates the following problem:
	
	\textbf{Problem} 6.1. Characterize graphs admitting an  $ic$-partition.

	In Observations \ref{obs1} and \ref{obs2}, we presented the sharp inequalities $IC(G)\le C(G)$ and $IC(G)\geq \chi(G)$. This raises the following problems:
	
	\textbf{Problem} 6.2. Characterize graphs $G$ in which the equality  $IC(G)=C(G)$ holds.  	
	
	\textbf{Problem} 6.3. Characterize graphs $G$ in which the equality $IC(G)=\chi(G)$ holds.
	
	In Theorem \ref{the-t=n-1}, trees $T$ of order $n$ with $IC(T)=n-1$ have been characterized. This raises the following problem:
	
	\textbf{Problem} 6.4. Characterize graphs $G$ of order $n$ with $IC(G)=n-1$.


\begin{thebibliography}{99}
		
		\bibitem{ref1} A. Bu\v{g}ra \"{O}zer, E. Saygı and Z. Saygı, Domination type parameters of Pell graphs, Ars Mathematica Contemporanea, 23(1) (2023) 
		https://doi.org/10.26493/1855-3974.2637.f61.
		
		\bibitem{ref2} G.J. Chang, The domatic number problem, Discrete Math. 125 (1994) 115-122.
		
		\bibitem{ref3} A. Gorzkowska,  M.A. Henning,  M. Pil\'{s}niak and  E. Tumidajewicz, Paired domination stability in graphs, Ars Mathematica Contemporanea, 22(2) (2022) 
		https://doi.org/10.26493/1855-3974.2522.eb3.
		
		
		\bibitem{ref4} T.W. Haynes, J.T. Hedetniemi, S.T. Hedetniemi, A.A. McRae and R. Mohan, Coalition graphs, Commun. Comb. Optim., 8(2) (2023) 423-430,
		DOI: 10.22049/CCO.2022.27916.1394.
		
		\bibitem{ref5} T.W. Haynes, J.T. Hedetniemi, S.T. Hedetniemi, A.A. McRae, and R. Mohan,
		Coalition graphs of paths, cycles and trees, Discuss. Math. Graph Theory, (2020),
		DOI:https://doi.org/10.7151/dmgt.2416.
		
		\bibitem{ref6} T.W. Haynes, J.T. Hedetniemi, S.T. Hedetniemi, A.A. McRae and R. Mohan, In-
		troduction to coalitions in graphs, AKCE Int. J. Graphs Comb., 17 (2020) 653–659,
		https://doi.org/10.1080/09728600.2020.1832874.
		
		\bibitem{ref7}   T.W. Haynes, J.T. Hedetniemi, S.T. Hedetniemi,
		A.A. McRae, and R. Mohan, Self-coalition graphs, Opuscula Math., 43(2) (2023) 173-183,
		https://doi.org/10.7494/OpMath.2023.43.2.173.
		
		\bibitem{ref8} T.W. Haynes, J.T. Hedetniemi, S.T. Hedetniemi, A.A. McRae and R. Mohan, Upper bounds on
		the coalition number, Australas. J. Comb., 80(3)  (2021) 442-453.
		\bibitem{ref9} T.W. Haynes, J.T. Hedetniemi, S.T. Hedetniemi,
		A.A. McRae and N. Phillips, The upper domatic number of a graph,
		AKCE International Journal of Graphs and Combinatorics
		17(1) (2020) 139-148, https://doi.org/10.1016/j.akcej.2018.09.003.
		
		\bibitem{ref10} T.W. Haynes, S.T. Hedetniemi and P.J. Slater, Fundamentals of Domination in Graphs,  Marcel Dekker, New York, (1998).
		
		
		\bibitem{ref11} A.C. Mart\'{i}nez, A note on the $k$-tuple domination number of graphs, Ars Mathematica Contemporanea, 22(4) (2022) 
		https://doi.org/10.26493/1855-3974.2600.dcc.
		
		\bibitem{ref12} A.C. Mart\'{i}nez, A. Estrada-Moreno and J.A. Rodr\'{i}guez-Vel\'{a}zquez, From Italian domination in lexicographic
		product graphs to $w$-domination in graphs, Ars Mathematica Contemporanea, 22(1) (2022) 
		https://doi.org/10.26493/1855-3974.2318.fb9.
		
		\bibitem{ref13} D.B. West, Introduction to Graph Theory (Second Edition), Prentice Hall, USA, (2001).
	\end{thebibliography}
\end{document}